\documentclass[11pt]{amsart}

\usepackage[USenglish]{babel}

\usepackage{amsmath,amsthm,amssymb,amscd}
\usepackage{booktabs}

\usepackage{MnSymbol}

\usepackage{tikz}
\usetikzlibrary{arrows,shapes.geometric,positioning,decorations.markings}

\usepackage[mathscr]{eucal}
\usepackage[normalem]{ulem}
\usepackage{latexsym,youngtab}
\usepackage{multirow}
\usepackage{epsfig}
\usepackage{parskip}

\usepackage[margin=2cm]{geometry}
\textwidth=16cm
\topmargin=0mm
\oddsidemargin=0mm
\evensidemargin=0mm
\textheight=22cm

%%NEEDED IN HEADER

\setcounter{MaxMatrixCols}{15}
\usepackage{color}

\newtheoremstyle{custom}% name
  {3pt}%      Space above
  {3pt}%      Space below
  {\slshape}%         Body font
  {}%         Indent amount (empty = no indent, \parindent = para indent)
  {\bfseries}% Thm head font
  {.}%        Punctuation after thm head
  { }%     Space after thm head: " " = normal interword space;
   {}%         Thm head spec (can be left empty, meaning `normal')
\theoremstyle{custom}
\newtheorem{theorem}{Theorem}[section]
\newtheorem{proposition}[theorem]{Proposition}
\newtheorem{proposition*}[theorem]{Proposition*}\newtheorem{theorem*}[theorem]{Theorem*}

\newtheorem{proposition/definition}[theorem]{Proposition/Definition}
\newtheorem{lemma}[theorem]{Lemma}
\newtheorem{corollary}[theorem]{Corollary}

\theoremstyle{definition}

\newtheorem{example}[theorem]{Example}

\theoremstyle{remark}
\newtheorem{remark}[theorem]{Remark}

% for when nothing else works

%\def\hin{$\bigstar$ }

%\def\hin{\epsfxsize=0.16in\epsfbox{smile.eps} \ }

%labelled equation

% The following macros control the format of exercises
\newtheoremstyle{exercise}% name
  {3pt}%      Space above
  {6pt}%      Space below
  {}%         Body font
  {}%         Indent amount (empty = no indent, \parindent = para indent)
  {\bfseries}% Thm head font
  {:}%        Punctuation after thm head
  { }%     Space after thm head: " " = normal interword space;
   {}%         Thm head spec (can be left empty, meaning `normal')
\theoremstyle{exercise}
\newtheorem{exercise}[theorem]{Exercise}
% the following should work, but doesn't
\newtheoremstyle{exercises}% name
  {3pt}%      Space above
  {6pt}%      Space below
  {}%         Body font
  {}%         Indent amount (empty = no indent, \parindent = para indent)
  {\bfseries}% Thm head font
  {:}%        Punctuation after thm head
  {\newline}%     Space after thm head: " " = normal interword space;
   {}%         Thm head spec (can be left empty, meaning `normal')

\theoremstyle{exercise}
\newtheorem{exercises}[theorem]{Exercises}

%optional argument is \label{whatever}

%optional is label, next is title

%\newcommand{\exerfuss}{
%\setlength{\topsep}{-10pt} %use -6pt with "exercises" theoremstyle
%\setlength{\itemsep}{-2pt}
%\setlength{\leftmargin}{0pt}  %would like to get rid of left indent
%\setlength{\labelwidth}{1em}
%\setlength{\labelsep}{0.6em}
%\setlength{\itemindent}{1.6em}% should be sum of two previous
%}

% \exerparts is used to generate sub-parts of an exercise within an exercise set

% sqlist is used on its own in Afrm.tex, to generate a roman list

% \exersethead is used in Ch.2 when there is a preamble after the exercise heading
% but before the parts of the exercise, which are put in using \exersetmiddle
%optional is \label{..}
%% For some reason, exersethead doesn't give the label the right value!
%\newcommand{\exerset}[2][{}]{\exersethead{#1}\exersetmiddle{#2}}
%chapter two uses a preamble between exercise heading & exercises

%\def\intprod{\negthinspace
%\mathbin{\raisebox{.4ex}{\hbox{\vrule height .5pt width 5pt depth 0pt %
%        \vrule height 3pt width .5pt depth 0pt}}}}

\input epsf
\def\boxit#1{\vbox{\hrule height1pt\hbox{\vrule width1pt\kern3pt
  \vbox{\kern3pt#1\kern3pt}\kern3pt\vrule width1pt}\hrule height1pt}}

%warning - command \La comes with a superscript

%this for Jarek's name

\def\bv{\mathbf{v}}

\def\BC{\mathbb C}\def\BN{\mathbb N}

\def\BP{\mathbb P}
\def\pp#1{\mathbb P^{#1}}

\def\fb{\mathfrak b}

\def\pp#1{{\mathbb P}^{#1}}
\def\tdim{{\rm dim}}

\def\hd{,\ldots,}
\def\ww{\wedge}

\newcommand{\abs}[1]{\lvert#1\rvert}

\def\RR{\mathbb R}

\def\BB{{\mathbb B}}

\def\11{\mathbf 1}

\def\fsl{{\mathfrak {sl}}}

\def\fu{{\mathfrak u}}

\def\a{\alpha}

\def\o{\omega}

\def\s{\sigma}

\def\ot{{\mathord{ \otimes } }}
\def\op{{\mathord{\,\oplus }\,}}
\def\otc{{\mathord{\otimes\cdots\otimes} }}

\def\ra{{\mathord{\;\rightarrow\;}}}

\def\dim{{\rm dim}\;}
\def\La#1{\Lambda^{#1}}

\def\frak{\mathfrak}
\def\fsl{\frak s\frak l}

\def\op{\oplus}
\def\BZ{\Bbb Z}

\def\ep{\epsilon}
\def\op{\oplus}

%\label - labels an eqn \eqref -refers to an eqn

\def\s{\sigma}
\def\t{\tau}

\def\a{\alpha}

\def\FS{\mathfrak  S}

\def\ol{\overline}

\def\BP{\mathbb  P}\def\BT{\mathbb  T}
\def\BC{\mathbb  C}

\def\pp#1{\mathbb  P^{#1}}

\def\tcodim{\text{codim}}

\def\ep{\epsilon}

\def\hd{, \hdots ,}

\def\La#1{\Lambda^{#1}}

\def\pp#1{\mathbb  P^{#1}}

\def\ur{\underline{\mathbf{R}}}

\def\ra{\rightarrow}

\def\tdet{\operatorname{det}}

\def\tdim{\operatorname{dim}}

\def\tlim{\lim}

\def\tmin{\operatorname{min}}

\def\ww{\wedge}

\def\bbb{{\mathbf{b}}}

\def\be{\begin{equation}}
\def\ene{\end{equation}}
\def\aaa{{\mathbf{a}}}
\def\bbb{{\mathbf{b}}}
\def\ccc{{\mathbf{c}}}
\def\tsgn{{\rm{sgn}}}

\newcommand{\Id}{\operatorname{Id}}

%\newcommand{\<}{\langle}
%\newcommand{\>}{\rangle}

%\newcommand{\rank}{\operatorname{rank}}
%\newcommand{\frkF}{\mathfrak{F}}

%\newcommand{\union}{\cup}

%\newcommand{\nbhd}{\operatorname{nbhd}}

%these in case we change the ordering of the chapters later on

%temporary definition

\def\Mn{M_{\langle \nnn \rangle}}\def\Mtwo{M_{\langle 2 \rangle}}\def\Mthree{M_{\langle 3\rangle}}

\def\Mn{M_{\langle \nnn \rangle}}\def\Mthree{M_{\langle 3\rangle}}

\def\Mtwo{M_{\langle 2\rangle}}\def\Mthree{M_{\langle 3\rangle}}

\def\aaa{\mathbf{a}}
\def\bbb{\mathbf{b}}
\def\ccc{\mathbf{c}}

\def\uuu{\mathbf{u}}

\def\mmm{\mathbf{m}}
\def\nnn{\mathbf{n}}
\def\lll{\mathbf{l}}

\def\bv{\mathbf{v}}
\def\bw{\mathbf{w}}

%from Grey

% for algorithms

\def\Mn{M_{\langle \nnn \rangle}}\def\Mthree{M_{\langle 3\rangle}}

\def\Mtwo{M_{\langle 2\rangle}}\def\Mthree{M_{\langle 3\rangle}}

\def\aaa{{\bold a}} \def\ccc{{\bold c}}

\def\uuu{\bold u}

\def\mmm{\bold m}\def\nnn{\bold n}\def\lll{\bold l}

\def\bv{\bold v}\def\bw{\bold w}

\def\BB{\mathbb{B}}

\keywords{Matrix multiplication complexity, Tensor rank, Asymptotic rank, Laser method}

\subjclass[2010]{68Q17; 14L30, 15A69}

\renewcommand{\a}{\alpha}

\renewcommand{\BC}{\mathbb{C}}

\renewcommand{\hat}[1]{\widehat{#1}}

 %% environments

%  \newtheorem{theorem}{Theorem}[section]
%\newtheorem{proposition}[theorem]{Proposition}
%\newtheorem{lemma}[theorem]{Lemma}
%\newtheorem{corollary}[theorem]{Corollary}
%\newtheorem{conjecture}[theorem]{Conjecture}

%\theoremstyle{definition}
%\newtheorem{definition}[theorem]{Definition}
%\newtheorem{example}[theorem]{Example}
%\newtheorem{problem}[theorem]{Problem}
%\newtheorem{question}[theorem]{Question}

%\theoremstyle{remark}
%\newtheorem{remark}[theorem]{Remark}

 \begin{document}

\author[A. Conner, A. Harper, J.M. Landsberg]{Austin Conner, Alicia Harper, and  J. M. Landsberg}

\address{Department of Mathematics, Texas A\&M University, College Station, TX 77843-3368, USA}
\email[A. Conner]{connerad@math.tamu.edu}
\email[A. Harper]{adharper@math.tamu.edu}
 \email[J.M. Landsberg]{jml@math.tamu.edu}

\title[Border rank of matrix multiplication and $\tdet_3$]{New lower bounds for matrix multiplication and $\tdet_3$}

\thanks{Landsberg   supported by NSF grant  AF-1814254 and   the grant 346300 for IMPAN 
from the Simons Foundation and the matching 2015-2019 Polish MNiSW fund as well as a Simons Visiting Professor grant 
supplied by the Simons Foundation and by the Mathematisches Forschungsinstitut Oberwolfach.}

\keywords{Tensor rank,   Matrix multiplication complexity, determinant complexity,  border rank}

\subjclass[2010]{15A69; 14L35, 68Q15}

\begin{abstract}  
Let $M_{\langle \uuu,\bv,\bw\rangle}\in \BC^{\uuu\bv}\ot \BC^{\bv\bw}\ot \BC^{\bw\uuu}$ denote the matrix multiplication tensor
  (and write $\Mn=M_{\langle \nnn,\nnn,\nnn\rangle}$) and let $\tdet_3\in (\BC^9)^{\ot 3}$ denote
  the determinant polynomial considered as a tensor. For a tensor $T$, let $\ur(T)$ denote its border rank.
We  
(i) give the first hand-checkable algebraic proof that  $\ur(\Mtwo)=7$,
(ii) prove $\ur(M_{\langle 223\rangle})=10$,  and $\ur(M_{\langle 233\rangle})=14$, where previously  the only
nontrivial
matrix multiplication tensor  whose border rank had been determined was $\Mtwo$,
(iii) 
prove  $\ur( \Mthree)\geq 17$, 
(iv) 
prove   $\ur(\tdet_3)=17$, improving the  previous lower  bound of $12$,
 (v)  prove
$\ur(M_{\langle 2\nnn\nnn\rangle})\geq \nnn^2+1.32\nnn$ for all $\nnn\geq 25$ (previously
only $\ur(M_{\langle 2\nnn\nnn\rangle})\geq\nnn^2+1$ was known) as well as   lower bounds
for $4\leq \nnn\leq 25$, and (vi) prove
$\ur(M_{\langle 3\nnn\nnn\rangle})\geq \nnn^2+2\nnn$ for all  $\nnn\geq 21$, where
previously only $\ur(M_{\langle 3\nnn\nnn\rangle})\geq \nnn^2+2$ was known,
 as well as   lower bounds
for $4\leq \nnn\leq 21$,.

Our results utilize a new technique, called {\it border apolarity}
 developed by Buczy\'{n}ska and Buczy\'{n}ski in the general context
 of toric varieties. We apply this technique
 to tensors with symmetry to obtain
 an algorithm that, given a tensor $T$ with a large symmetry group and an integer $r$,   in a finite number of steps, either outputs that
there is no border rank $r$ decomposition for  $T$ or  produces a list of all potential
border rank $r$ decompositions in a natural normal form.  
The algorithm  is based on   algebraic geometry  and   representation
theory. 
The two key ingredients are: (i) the use of a multi-graded ideal associated to a   border rank $r$ decomposition of any tensor, and  
(ii)
 the exploitation of the large symmetry group of $T$
  to restrict to $\BB_T$-invariant ideals, where $\BB_T$ is a maximal solvable subgroup of the symmetry group of $T$.
 \end{abstract}

\maketitle

\section{Introduction}

Over fifty years ago  Strassen \cite{Strassen493} discovered that the usual row-column method for multiplying
$\nnn\times\nnn$ matrices,
which uses $O(\nnn^3)$ arithmetic operations, is not optimal by exhibiting an
explicit algorithm to multiply matrices using $O(\nnn^{2.81})$ arithmetic operations. Ever since then substantial efforts have
been made to determine just how efficiently matrices may be multiplied.
 See any of  \cite{BCS,blaserbook,MR3729273} for an overview.
 Matrix multiplication of $\nnn\times \lll$ matrices with $\lll\times \mmm$ matrices  is a bilinear map, i.e.,
a tensor $M_{\langle \lll,\mmm,\nnn\rangle}\in \BC^{\lll\mmm}\ot \BC^{\mmm\nnn}\ot \BC^{\nnn\lll}$, and   since 1980 \cite{MR605920}, the primary complexity
measure of the matrix multiplication tensor has been its {\it border rank} which is defined as follows:

A tensor $T\in \BC^\aaa\ot \BC^\bbb\ot \BC^\ccc=:A\ot B\ot C$ has {\it rank one} if   
$T=a\ot b\ot c$ for some $a\in A$, $b\in B$, $c\in C$, and
the   {\it rank} of  $T$, denoted $\bold R(T)$,  is the smallest $r$ 
such that $T$ may be written as a sum of $r$ rank one tensors. 
The {\it border rank} of $T$, denoted $\ur(T)$,   is the smallest $r$ such that 
$T$ may be written as a limit of a sum of $r$ rank one tensors. In geometric
language, the border rank is  smallest $r$ such that $[T]\in \sigma_r(Seg(\BP 
A\times \BP B\times \BP C))$.  Here $\sigma_r(Seg(\BP 
A\times \BP B\times \BP C))$  denotes  the $r$-th secant variety of the Segre variety of 
rank one tensors. 

Despite the vast literature on matrix multiplication, previous to this paper, the precise border rank
of $M_{\langle \lll,\mmm,\nnn\rangle}$ was known in exactly one nontrivial case, namely $\Mtwo=M_{\langle 222\rangle}$
\cite{MR2188132}. We determine the border rank in two new cases, $M_{\langle 223\rangle}$ and $M_{\langle 233\rangle}$.
We prove new border rank lower bounds for $\Mthree$ and two infinite series of new cases, $M_{\langle 2\nnn\nnn\rangle}$
and $M_{\langle 3\nnn\nnn\rangle}$. See \S\ref{results} below for precise statements.

The primary technique  for proving lower bounds previous to this work was using equations called {\it Koszul flattenings}
\cite{MR3081636,MR3376667} of which Strassen's equations \cite{Strassen505} is a special case. That is, these equations
are polynomials that vanish on all tensors of border rank less than a given $r$, and to prove
a tensor $T$  has border rank greater than $r$, one proves they do not
vanish on $T$. 
Koszul flattenings, even combined with
the {\it border substitution method}  \cite{MR3578455,MR3633766,MR3842382}  have been shown to be   near the limit 
of their utility for proving further  border
rank lower bounds \cite{MR3761737,MR3611482}. Our advances use  a new technique introduced in \cite{BBapolar}
in the larger context of the study of secant varieties of toric varieties, called {\it border apolarity}.
In a nutshell, border apolarity combines the classical apolarity method for Waring rank with the
border substitution method.

This is the first in a planned series of papers applying border apolarity to  
  tensors with symmetry.
 The technique is potentially
useful for both upper and lower bounds.
It is also potentially not subject to the known  barriers to proving lower bounds \cite{MR3761737,MR3611482} (see
\cite[\S 2.2]{LCBMS} for an overview), nor the known barriers to upper bounds
\cite{MR3388238,DBLP:conf/innovations/AlmanW18,2018arXiv181008671A,DBLP:journals/corr/abs-1812-06952}.
A border rank $r$ decomposition of a tensor $T$ is usually defined in terms of a curve
of tensors of rank $r$ limiting to $T$. The technique    replaces this   with more information:  a curve of
$\BZ^3$-graded ideals of smooth zero dimensional
schemes of length $r$ that limits to an ideal   with $T$ in its zero set.
The method then places numerous restrictions on ideals for them to arise 
from a border rank decomposition.  

For readers in computer science, we remark that despite the language used to derive
the algorithm, the border rank tests we derive from it for the matrix multiplication
tensor are elementary and easy to implement. Moreover, for any tensor with continuous  symmetry
one may obtain similar tests using only the rudiments of representation theory, namely
the definition of   weight vectors.

\subsection{Results}\label{results}

\begin{theorem}\label{mthreethm} $\ur(\Mthree)\geq 17$.
\end{theorem}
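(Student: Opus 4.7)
The plan is to apply the border apolarity technique of Buczy\'{n}ska--Buczy\'{n}ski developed in \cite{BBapolar} and adapted to tensors with large symmetry as outlined in the introduction. Suppose for contradiction that $\ur(\Mthree) \leq 16$. Then border apolarity produces a $\BZ^3$-multigraded ideal $I \subset \Sym(A^* \oplus B^* \oplus C^*)$, where $A\isom B\isom C \isom \BC^9$, whose associated multigraded projective scheme is a limit of reduced length-$16$ subschemes and whose degree-$(1,1,1)$ piece $I_{(1,1,1)}$ annihilates $\Mthree$ under the natural trilinear pairing.

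The first step is to exploit symmetry. The symmetry group of $\Mthree$ contains $G = \GL_3^{\times 3}$, acting on each factor $\BC^9 = \BC^3\otimes\BC^3$ by the outer tensor product of standard representations. The relevant multigraded Hilbert scheme parameterizing candidate ideals $I$ is projective and $G$-equivariant, hence by the Borel fixed point theorem applied to the $G$-invariant locus of candidates, one may assume $I$ is $\BB$-invariant, where $\BB := B_3^{\times 3}$ is a Borel subgroup of $G$. Since $\BB$ is solvable, every $\BB$-invariant subspace of a graded piece $S^{d_1}A^*\otimes S^{d_2}B^*\otimes S^{d_3}C^*$ is controlled by an order ideal in the $\BB$-weight lattice together with weight-compatible complements, so the candidates in each fixed multidegree and fixed dimension form a finite, combinatorially enumerable collection.

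The second step is to enumerate candidates in the low multidegrees $(1,1,0)$, $(1,0,1)$, $(0,1,1)$ (and subsequently $(1,1,1)$, $(2,1,0)$, and so on). For a border rank $16$ decomposition the codimension of $I_{(d_1,d_2,d_3)}$ in the corresponding multigraded piece is at most $16$, and equals exactly $16$ in all sufficiently high multidegrees. Using representation theory of $\BB$, one explicitly lists all $\BB$-invariant subspaces of the required codimensions in each of these pieces, indexed by weights.

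The third and main step is to impose the apolarity and consistency conditions and show that no candidate survives. For each compatible triple $(I_{(1,1,0)},I_{(1,0,1)},I_{(0,1,1)})$ one verifies: (a) the cyclic multiplication maps $I_{(1,1,0)}\otimes C^*\to A^*\otimes B^*\otimes C^*$ and their two cyclic analogues all factor through a common subspace $I_{(1,1,1)}$ of codimension at most $16$; (b) this $I_{(1,1,1)}$ annihilates $\Mthree$ under the natural pairing $A\otimes B\otimes C \times A^*\otimes B^*\otimes C^*\to \BC$; and (c) the multigraded Hilbert function of $\Sym(A^*\oplus B^*\oplus C^*)/I$ remains bounded by $16$ in every multidegree, eventually stabilizing at $16$. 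Each condition is a linear-algebraic constraint that decomposes along $\BB$-weights. The hard part will be controlling the combinatorial explosion of compatible triples and, for each survivor, verifying the high-multidegree Hilbert function bound; it is here that the symmetry-organized enumeration is essential, reducing what would otherwise be an intractable search to a finite, weight-by-weight analysis. When every candidate is ruled out, the contradiction gives $\ur(\Mthree)\geq 17$.
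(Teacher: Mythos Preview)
Your outline follows the same border apolarity strategy as the paper, but there is a genuine gap in the finiteness claim. You assert that the $\BB$-fixed subspaces of a given dimension in each multigraded piece ``form a finite, combinatorially enumerable collection.'' This is false for $\Mthree$: the weight-zero subspace of $\fsl_3$ is two-dimensional, so the $\BB$-fixed $7$-dimensional subspaces $E_{110}'\subset U^*\otimes\fsl(V)\otimes W$ (here $7=16-9$, since $\Mthree(C^*)=U^*\otimes\Id_V\otimes W$ has dimension $9$) occur in positive-dimensional families. Without a mechanism to cut these families down to finitely many survivors, your enumeration does not terminate, and the ``weight-by-weight analysis'' you describe is not a finite computation.

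The paper resolves this by first imposing the $(210)$ and $(120)$ multiplication constraints: one requires that the images of $E_{110}^\perp\otimes A^*\to S^2A^*\otimes B^*$ and $E_{110}^\perp\otimes B^*\to A^*\otimes S^2B^*$ each have codimension at least $16$. For the parameter families this amounts to computing the locus where certain $144\times 405$ matrices with polynomial entries drop rank, and this locus turns out to be zero-dimensional; exactly eight $E_{110}'$ survive. Only then does one form the (now finitely many, $176$ after symmetry) candidate triples $(I_{110},I_{101},I_{011})$ and check that none pass the $(111)$-test, i.e., that the image of $I_{110}\otimes C^*\oplus I_{101}\otimes B^*\oplus I_{011}\otimes A^*\to A^*\otimes B^*\otimes C^*$ always has codimension strictly less than $16$. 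Your condition (c) on the full Hilbert function is in the right spirit but is not needed here: the $(210)$, $(120)$, and $(111)$ tests alone already eliminate every candidate.
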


The previous lower bounds were $16$  \cite{MR3633766} in 2018,  $15$   \cite{MR3376667} in 2015, and   $14$  
\cite{Strassen505} in 1983.

Let $\tdet_3\in \BC^9\ot \BC^9\ot \BC^9$ denote the $3\times 3$ determinant polynomial considered as a tensor.

\begin{theorem} \label{detthreethm} $\ur(\tdet_3)= 17$.
\end{theorem}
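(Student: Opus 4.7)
The plan is to prove the upper and lower bounds $\ur(\tdet_3) \leq 17$ and $\ur(\tdet_3) \geq 17$ separately, with the new content being the lower bound.

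For the upper bound, I would appeal to a known (approximate) construction. The determinant tensor is a symmetrization of the matrix multiplication tensor, since for $3\times 3$ matrices $\tdet_3(X,Y,Z) = \tfrac{1}{2}\bigl(\operatorname{tr}(XYZ) + \operatorname{tr}(XZY)\bigr)$ after polarization. In particular, a border-rank $r$ approximate algorithm for $\Mthree$ that is symmetric under transposition of the last two factors yields a border-rank $r$ approximate algorithm for $\tdet_3$, and Smirnov-style constructions provide such a decomposition with $r = 17$. Alternatively, one can produce an explicit one-parameter family of rank $17$ tensors converging to $\tdet_3$ by exploiting its rich symmetry: the stabilizer of $\tdet_3$ in $\GL(\BC^9)^{\times 3}$ contains $(\SL_3 \times \SL_3)\rtimes \BZ_3$ (acting simultaneously on all three factors by left/right multiplication and cyclic permutation), so one searches for a $\BB_T$-semi-invariant ansatz indexed by a small number of torus orbits and solves the resulting finite system.

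For the lower bound $\ur(\tdet_3) \geq 17$, the plan is to apply the border apolarity machinery developed in the paper, in direct parallel with the proof of Theorem~\ref{mthreethm}. Suppose for contradiction that $\ur(\tdet_3) \leq 16$. A border rank $16$ decomposition produces, via the central construction of border apolarity, a $\BZ^3$-multigraded ideal $I \subset \Sym(A^*\op B^*\op C^*)$ whose quotient has multigraded Hilbert function matching that of $16$ smoothable points, and whose annihilator contains $\tdet_3$ under the apolarity pairing. After conjugating by the symmetry group, one may assume $I$ is $\BB_T$-invariant, where $\BB_T$ is a Borel subgroup of the identity component of the stabilizer of $\tdet_3$. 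The key computational step is then to enumerate every $\BB_T$-invariant multigraded ideal with the admissible Hilbert function using the weight-vector structure of $\SL_3 \times \SL_3$-modules, and for each surviving candidate verify whether the apolarity pairing can recover $\tdet_3$. The goal is to show that every candidate fails this test.

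The main obstacle will be the classification and elimination of $\BB_T$-invariant candidate ideals: even though the symmetry drastically reduces the search to a finite combinatorial problem in terms of weights, the list of admissible multigraded Hilbert functions is nontrivial, and each must be treated by hand or by short machine verification. A secondary subtlety is distinguishing $\BB_T$-invariant ideals that are genuine flat limits of reduced schemes from spurious candidates: one either rules these out by a smoothability/scheme-theoretic obstruction or by directly computing that the associated tensor cannot equal $\tdet_3$. Since the same technique yields $\ur(\Mthree) \geq 17$ in Theorem~\ref{mthreethm}, the analysis here should closely mirror that proof, with the larger symmetry of $\tdet_3$ plausibly shortening, rather than complicating, the case division.
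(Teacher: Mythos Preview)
Your lower-bound plan is essentially the paper's: run the border apolarity algorithm of \S\ref{algsect} against $r=16$, using the Borel of $SL(U)\times SL(V)$ to reduce to finitely many $\BB_T$-fixed candidate $E_{110}'\subset S^2U\ot S^2V\oplus S^2U\ot\La2V\oplus\La2U\ot S^2V$ of dimension $7$, apply the $(210)$ and $(120)$ tests, and then eliminate the surviving triples with the $(111)$ test. Two corrections: the elimination criterion is not ``whether the apolarity pairing can recover $\tdet_3$'' but the purely linear-algebraic rank conditions on the maps \eqref{f210}, \eqref{f120}, \eqref{111map}; and you never need to worry about smoothability or flat-limit obstructions, because the algorithm shows \emph{no} $\BB_T$-fixed ideal survives, which already forces $\ur(\tdet_3)>16$. (Also, the discrete part of the symmetry is $\BZ_2\times\FS_3$, not $\BZ_3$.)

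The upper bound, however, has a genuine gap. Your claimed identity $\tdet_3(X,Y,Z)=\tfrac12\bigl(\tr(XYZ)+\tr(XZY)\bigr)$ is false: already on diagonal matrices the right side gives $\sum_i x_iy_iz_i$, while the polarized determinant gives the permanent $\sum_{\sigma\in\FS_3}x_{\sigma(1)}y_{\sigma(2)}z_{\sigma(3)}$. Newton's identity $6\det X=(\tr X)^3-3\tr X\,\tr X^2+2\tr X^3$ shows that the polarized determinant involves $\tr(XYZ)$ \emph{together with} the lower-order trace terms, so a symmetric border-rank decomposition of $\Mthree$ does not yield one for $\tdet_3$. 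Nor do the Smirnov tables you cite address $\tdet_3$ at all. The paper simply imports the bound $\ur(\tdet_3)\le 17$ from \cite{CGLVkron}, where an explicit border-rank $17$ decomposition is given; you should do the same, or reproduce that construction, rather than attempt the (nonexistent) reduction to $\Mthree$.
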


The upper bound was proved in \cite{CGLVkron}.  In \cite{BOIJ2020195} a lower bound of $15$ for the Waring rank of $\tdet_3$
was proven.
The previous border rank lower bound was $12$ as discussed  in \cite{MR3427655},  which follows from the
Koszul flattening equations   \cite{MR3376667}.
Theorem \ref{detthreethm} has potential consequences for {\it upper} bounds on the exponent of matrix multiplication, 
see \cite{CGLVkron} for a discussion.

We   give   the first hand-checkable algebraic proof that $\ur(\Mtwo)=7$.
We actually give three such proofs. The first is the most elementary and only requires
the computation of the ranks of two sparse  size $40\times 24$ integer valued matrices.
The computations for the other two are even easier, while the proofs use 
some elementary representation theory. The previous algebraic proof \cite{MR3171099}  required the
evaluation of a degree $20$ polynomial in $64$ variables, which could only
be done with the help of a computer. The original differential-geometric
proof \cite{MR2188132} relied on a complicated case by case analysis of osculating spaces.

Previous to this paper $\Mtwo$ was the only
nontrivial matrix multiplication tensor whose border rank had been determined, despite $50$ years of work on the subject.
We add two   more cases to this list:

\begin{theorem} $\ur(M_{\langle 223\rangle})=10$.     \label{223thm}
\end{theorem}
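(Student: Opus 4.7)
The upper bound $\ur(M_{\langle 2,2,3\rangle})\leq 10$ is known from earlier explicit constructions, so my plan is to establish the matching lower bound by applying the border apolarity algorithm described earlier in the paper to $T:=M_{\langle 2,2,3\rangle}\in A\ot B\ot C$ at target border rank $r=9$, and to show that it certifies no border rank $9$ decomposition of $T$ exists. Write $A=U_1\ot U_2^*$, $B=U_2\ot U_3^*$, $C=U_3\ot U_1^*$ with $\dim U_1=\dim U_2=2$ and $\dim U_3=3$; the symmetry group of $T$ contains $\GL(U_1)\times \GL(U_2)\times \GL(U_3)$, and I take $\BB_T$ to be the product of standard Borel subgroups together with its associated diagonal torus.

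Assuming $\ur(T)\leq 9$, border apolarity produces a $\ZZ^3$-multigraded ideal $I\subset \Sym^\bullet A^*\ot \Sym^\bullet B^*\ot \Sym^\bullet C^*$ whose quotient has the Hilbert function of a length-$9$ subscheme of $\PP A\times \PP B\times \PP C$, and such that $T$ annihilates $I_{(1,1,1)}$ under the natural pairing. As explained earlier in the paper, one may further take $I$ to be $\BB_T$-fixed, so that each graded piece $I_{(d_1,d_2,d_3)}$ is a sum of weight spaces for the triple torus. The first task is to enumerate the $\BB_T$-fixed subspaces of $A^*$, $B^*$, $C^*$ of the codimensions forced by the Hilbert function in multi-degrees $(1,0,0)$, $(0,1,0)$, $(0,0,1)$, then propagate to multi-degrees $(1,1,0)$, $(1,0,1)$, $(0,1,1)$, $(2,0,0)$, and so on, enforcing at each step both multiplicative closure under multiplication by $A^*$, $B^*$, $C^*$ and the bound $\dim (R/I)_{(d_1,d_2,d_3)}\leq 9$.

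For each candidate that survives propagation to a sufficiently high multi-degree horizon, I then impose the apolarity condition that $T$ kill $I_{(1,1,1)}$, which is a linear condition on the weight content of that graded piece; most candidates will be eliminated immediately. The remaining candidates must either fail Hilbert-function growth further out, or be shown to lie outside the closure of the locus of ideals of reduced length-$9$ subschemes. Because at every multi-degree there are only finitely many $\BB_T$-fixed subspaces of fixed dimension, and because the apolarity condition is linear, the entire search is finite and terminates.

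The main obstacle is the combinatorial explosion in the enumeration and propagation phase: the branching of $\BB_T$-fixed ideals at each multi-degree can be substantial, and keeping it tractable requires organizing the search by $\GL(U_1)\times \GL(U_2)\times \GL(U_3)$-representation type and exploiting any residual discrete symmetries of $T$ (for instance the $\ZZ_2$ obtained from swapping the two $2$-dimensional factors). A secondary delicate point is choosing the multi-degree horizon large enough that every branch is killed by either Hilbert-function failure or failure of apolarity with $T$. Once every branch of the search has been eliminated, the contradiction yields $\ur(M_{\langle 2,2,3\rangle})\geq 10$, and together with the known upper bound we obtain the theorem.
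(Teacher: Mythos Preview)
Your proposal identifies the correct framework (border apolarity with a Borel-fixed ideal), but it remains a plan rather than a proof, and several of the steps you outline are either unnecessary or rest on a false premise.

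First, the finiteness claim ``at every multi-degree there are only finitely many $\BB_T$-fixed subspaces of fixed dimension'' is not true in general: once a weight occurs with multiplicity in the ambient module, $\BB_T$-fixed subspaces of a given dimension can form positive-dimensional families (the paper makes exactly this point when treating $\Mthree$ and $\tdet_3$). Your termination argument therefore needs more, e.g.\ showing the relevant modules are multiplicity-free in the degrees you actually use, or handling parameters.

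Second, and more substantively, the paper's proof is dramatically shorter than your outline because it never propagates beyond degree $(1,1,0)$, never touches the apolarity condition in degree $(1,1,1)$, and never needs a Hilbert-scheme closure argument. The key structural point you are missing is that $M_{\langle 2,2,3\rangle}(C^*)=U^*\ot\Id_V\ot W$, so any candidate $E_{110}=I_{110}^\perp$ decomposes as $(U^*\ot\Id_V\ot W)\oplus E_{110}'$ with $E_{110}'$ a $\BB$-fixed $(r-\uuu\bw)$-plane in $U^*\ot\fsl(V)\ot W$. Taking $\uuu=\bv=2$, $\bw=3$, $r=9$ forces $\dim E_{110}'=3$, and since $U^*\ot\fsl_2\ot W$ is multiplicity-free there are exactly eight such planes. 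The paper simply checks that none of these eight passes both the $(210)$ and the $(120)$ rank test (only one passes $(210)$, and it fails $(120)$), which already contradicts $\ur\le 9$. The degrees $(1,0,0)$, $(0,1,0)$, $(0,0,1)$ contribute nothing (conciseness forces $I_{100}=I_{010}=I_{001}=0$), and nothing in degree $(1,1,1)$ or higher is used.

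So your approach would eventually succeed, but only after you repair the finiteness claim and do the actual enumeration; the paper's argument shows that essentially all of the machinery you invoke beyond the $(210)/(120)$ tests in degree $(1,1,0)$ is unnecessary for this case.
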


The upper bound dates all the way back to Bini et. al. in 1980 \cite{MR592760}. 
Koszul flattenings  \cite{MR3376667} give $\ur(M_{\langle 22\nnn\rangle})\geq 3\nnn$.
Smirnov  \cite{MR3146566} showed that $\ur(M_{\langle 22\nnn\rangle})\leq 3\nnn+1$ for $\nnn\leq 7$,   
 and we expect equality to hold for all $\nnn$.

\begin{theorem}\label{2nnbnds} \

\begin{enumerate}
\item $\ur(M_{\langle 233\rangle})=14$.
\item  We have the following border rank lower bounds:
\begin{align*} \nnn \ \ \ \ \  & \ur(M_{\langle 2\nnn\nnn\rangle})\geq &\ \ \ \ \nnn \ \ \ \ \  & \ur(M_{\langle 2\nnn\nnn\rangle})\geq
&\ \ \ \ \nnn \ \ \ \ \  & \ur(M_{\langle 2\nnn\nnn\rangle})\geq \\
4 \ \ \ \ \  &22=4^2+6& \ \ \ 11 \ \ \ \ \  &136=11^2+15 & \ \ \  18 \ \ \ \ \  &348= 18^2+24    \\
5 \ \ \ \ \  &32=5^2+7 & \ \ \  12 \ \ \ \ \  &161=12^2+17 & \ \ \  19 \ \ \ \ \  &387=19^2+26     \\
6 \ \ \ \ \  &44=6^2+8& \ \ \  13 \ \ \ \ \  &187=13^2+18 & \ \ \  20 \ \ \ \ \  &427= 20^2+27    \\ 
7 \ \ \ \ \  &58=7^2+9& \ \ \  14 \ \ \ \ \  &216=14^2+20 & \ \ \  21 \ \ \ \ \  &470=  21^2+29   \\ 
8 \ \ \ \ \  &75=8^2+11& \ \ \  15 \ \ \ \ \  &246=15^2+21 & \ \ \  22 \ \ \ \ \  &514= 22^2+30    \\ 
9 \ \ \ \ \  &93=9^2+12& \ \ \  16 \ \ \ \ \  &278=16^2+22 & \ \ \  23 \ \ \ \ \  &561=  23^2+32   \\ 
10 \ \ \ \ \  &114=10^2+14& \ \ \  17 \ \ \ \ \  &312=17^2+23 & \ \ \  24 \ \ \ \ \  &609=  24^2+33 . 
%\ \  \ \ \ \ \  &\ \ \ \ \ & \ \ \  \ \  \ \ \ \ \  &\ \ \ \ \ \ \  & \ \ \  25 \ \ \ \ \  &658= 25^2+33      .
\end{align*}

\item For $0 < \epsilon < \frac{1}{4}$, and $\nnn>\frac{6}{\epsilon} \frac{3\sqrt{6}+6 - \epsilon}{6\sqrt{6} - \epsilon}$,
$\ur(M_{\langle 2\nnn\nnn\rangle})\geq \nnn^2+(3\sqrt{6}-6-\ep) \nnn+1$. In
    particular, $\ur(M_{\langle 2\nnn\nnn\rangle})\geq \nnn^2+1.32 \nnn+1$ when
    $\nnn\geq 25$.
\end{enumerate}
\end{theorem}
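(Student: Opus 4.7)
The plan is to apply the border apolarity method of \cite{BBapolar} to $T = M_{\langle 2\nnn\nnn\rangle}$, exploiting its symmetry group $G = GL_2 \times GL_{\nnn} \times GL_{\nnn}$. Fix a Borel subgroup $\BB \subset G$. The core input from \cite{BBapolar} is: if $\ur(T) \leq r$ then there exists a $\BB$-fixed, $\BZ^3$-graded ideal $I \subset \Sym(A^*\op B^*\op C^*)$ whose multi-graded Hilbert function is bounded above by $r$, which lies in the multi-graded Hilbert scheme of length-$r$ subschemes (in particular its Hilbert function has bounded growth in the Macaulay/Gotzmann sense), and whose degree $(1,1,1)$ component annihilates $T$ under the apolarity pairing. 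The proof proceeds by ruling out every such $I$ when $r$ is strictly below the claimed bound.

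For parts (1) and (2) the program is finite. For each $\nnn \in \{3,\ldots,24\}$ and target $r$ one step below the claimed lower bound, enumerate the $\BB$-fixed candidate ideals. Borel-fixity forces $I$ to be generated by weight vectors, so the enumeration reduces to a combinatorial search over monomial-like ideals in each tridegree, organized by the $GL_{\nnn}\times GL_{\nnn}$ weight lattice. Each candidate is then subjected to three tests from \cite{BBapolar}: the multi-graded Hilbert function must actually equal $r$ in sufficiently high tridegree; the ideal must satisfy the Macaulay/Gotzmann growth bounds in each coordinate direction in order to lie in the multi-graded Hilbert scheme; and the apolarity pairing at the $(1,1,1)$ slice must kill the highest weight component of $T$. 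The upper bound $\ur(M_{\langle 233\rangle})\leq 14$ in part (1) is already in the literature \cite{MR592760,MR3146566}, so only the lower bounds are new. The reduction in case count provided by $\BB$-fixity, together with the rigidity coming from the small factor $A = \BC^2$, should keep the case analysis hand- or machine-tractable across the table.

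For the asymptotic bound in part (3), enumeration must be replaced by a uniform structural argument. Fix $\BB$ and a putative $\BB$-fixed ideal $I$ as above. Because $A$ has dimension $2$, the multi-graded Hilbert function of $I$ at tridegrees of the form $(1,d,d')$ is highly constrained and can be summarized by a small number of parameters describing the shape of the initial ``staircase'' in the $B$- and $C$-directions. A Macaulay growth analysis in the bidegrees $(0,*,*)$, combined with the apolarity constraint that cuts $T$ out of the $(1,1,1)$ piece, should yield an inequality of the form
\[
 r \;\geq\; \nnn^2 \;+\; f(\alpha)\,\nnn \;+\; O(1),
\]
where $\alpha \in [0,1]$ is the parameter governing the extremal staircase shape and $f$ is a concave function whose maximum over the feasible region is $3\sqrt{6}-6$, attained at an interior critical point (the appearance of $\sqrt{6}$ reflecting a quadratic optimization with a $6$-term symmetric constraint coming from the two $\nnn$-dimensional factors). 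Taking the adversarial choice of $\alpha$ and absorbing the $O(1)$ error into the slack $\ep$ produces the stated bound $r \geq \nnn^2+(3\sqrt{6}-6-\ep)\nnn+1$; the explicit threshold $\nnn > (6/\ep)\cdot(3\sqrt{6}+6-\ep)/(6\sqrt{6}-\ep)$ is exactly the crossover where the $O(1)$ error no longer dominates the $\ep$-slack.

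The main obstacle will be the uniform argument in part (3): one must parameterize the $\BB$-fixed Hilbert function shapes by a single $\alpha$ (or a small set of parameters) in such a way that every candidate is captured without additional sub-cases, and one must carry out the Macaulay/Gotzmann analysis sharply enough to obtain the irrational constant $3\sqrt{6}-6$ rather than a weaker rational substitute. The quantitative $\nnn$-threshold is then a matter of bookkeeping the lower-order terms. The finite cases in (1) and (2) are conceptually cleaner but require disciplined organization of the enumeration and, for the larger $\nnn$ in the table, machine assistance to verify the vanishing of the apolarity pairing across all candidate ideals.
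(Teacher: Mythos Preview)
Your high-level framework---border apolarity plus restriction to $\BB$-fixed ideals---is right, but the concrete mechanism you describe is not the one the paper uses, and the substitutes you name would not by themselves produce the bounds.

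The paper does \emph{not} use Macaulay/Gotzmann growth bounds or the $(1,1,1)$ apolarity pairing for this theorem. The entire argument rests on the $(210)$ and $(120)$ tests alone: for a candidate $\BB$-fixed $E_{110}=M_{\langle 2\nnn\nnn\rangle}(C^*)\oplus E_{110}'$ with $E_{110}'\subset U^*\ot\fsl(V)\ot W$ of dimension $\rho=r-\nnn^2$, one must have the kernels of both $E_{110}\ot A\to\La 2A\ot B$ and $E_{110}\ot B\to A\ot\La 2B$ of dimension at least $r$. Part~(1) is a hand enumeration of the nine $\BB$-fixed $4$-planes in $U^*\ot\fsl_2\ot W$ (with $\uuu=\bw=3$) and a direct rank check; part~(2) is the same search by computer for $4\le\nnn\le 24$. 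No ideal beyond degree $(2,1,0)$/$(1,2,0)$ is ever examined.

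For part~(3) your ``staircase parameter $\alpha$ plus concave $f$'' is too vague to be a plan, and the bidegrees $(0,*,*)$ you mention are irrelevant. The actual structure is this: decompose $E_{110}'$ by its \emph{outer structure} (the multiset of $\fsl(U)\oplus\fsl(W)$ weights, viewed as an $\nnn\times\nnn$ grid) and its \emph{inner structure} (at each grid point $(s,t)$ a $\BB$-fixed subspace $X\subset\fsl_2$ of dimension $j\in\{1,2,3\}$). Filtering $E_{110}'$ admissibly, one computes exactly (Proposition~\ref{filterprop}, Lemma~\ref{absl2}) that the contribution at site $(s,t)$ to the $(210)$-kernel is $a_js+b_j$ and to the $(120)$-kernel is $a_jt+b_j$, with $(a_1,b_1)=(2,0)$, $(a_2,b_2)=(3,\nnn)$, $(a_3,b_3)=(4,2\nnn)$. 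The problem is then a pure combinatorial optimization over labelled Young tableaux (Lemma~\ref{alem2}), solved via Lemma~\ref{lemma:singlebound} and an AM--HM argument (Lemma~\ref{lemma:opt}), yielding
\[
\min\{\text{ker}_{210},\text{ker}_{120}\}\le\max_{1\le j\le 3}\Big\{\tfrac{a_j\rho^2}{8j^2}+(a_j+b_j)\tfrac{\rho}{j}\Big\}.
\]
The constant $3\sqrt 6-6$ is the positive root of $\tfrac{c^2}{18}+\tfrac{2c}{3}=1$, which is exactly the $j=3$ term with $\rho=c\nnn$; it has nothing to do with a ``$6$-term symmetric constraint'' or Macaulay bounds. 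Without the inner/outer filtration and the explicit $(a_j,b_j)$ table for $\fsl_2$, there is no route to that constant.
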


Previously it was only known that $\ur(M_{\langle 2\nnn\nnn\rangle})\geq \nnn^2+1$ by
\cite[Rem. p175]{MR86c:68040}.

The upper bound in (1) is due to Smirnov \cite{MR3146566}, where he also proved  $\ur(M_{\langle 244\rangle})\leq 24$, 
and  $  \ur(M_{\langle 255\rangle})\leq 38$.
  When $\nnn$ is even, one has the upper bound $\ur(M_{\langle 2\nnn\nnn\rangle})\leq \frac 74\nnn^2$
by writing $M_{\langle 2\nnn\nnn\rangle}=M_{\langle 222\rangle}\boxtimes M_{\langle 1\frac \nnn 2\frac \nnn 2\rangle}$, where $\boxtimes$
denotes Kronecker product of tensors, see, e.g., \cite{CGLVkron}. For general $\nnn$, we only know the trivial upper bound of $2\nnn^2$.

\begin{theorem}\label{mnnthm} \
\begin{enumerate}
\item For $4\leq \nnn\leq 13$, $\ur(M_{\langle 3\nnn\nnn\rangle})\geq   \nnn^2+2 \nnn -1$.
\item For $14\leq \nnn\leq 20$, $\ur(M_{\langle 3\nnn\nnn\rangle})\geq   \nnn^2+2 \nnn $.
\item For all $\nnn\geq 21$, $\ur(M_{\langle 3\nnn\nnn\rangle})\geq   \nnn^2+2 \nnn +1$.
\item For $0 < \epsilon < \frac{1}{2}$ and $ 
    \nnn > \frac{64}{3\epsilon} \frac{16\sqrt{78} - 96 - 21\epsilon}{32\sqrt{78}
    - 21\epsilon} $, 
    $\ur(M_{\langle 3\nnn\nnn\rangle})\geq \nnn^2+
    (\frac{16\sqrt{78}}{21}-\frac{32}7-\ep)\nnn + 1$.
\end{enumerate}
\end{theorem}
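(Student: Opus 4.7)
The plan is to apply the border apolarity framework, following the template of the proof of Theorem \ref{2nnbnds}. Set $A = \BC^3 \otimes \BC^{\nnn}$, $B = \BC^\nnn \otimes \BC^\nnn$, $C = \BC^\nnn \otimes \BC^3$, and suppose $\ur(M_{\langle 3\nnn\nnn\rangle}) \leq r$. By border apolarity, there is a $\BZ^3$-graded saturated ideal $I$ in the Cox ring of $\PP A \times \PP B \times \PP C$ arising as a flat limit of ideals of $r$ reduced points on the Segre variety, with the property that $M_{\langle 3\nnn\nnn\rangle}$ pairs to zero with every element of $I_{(1,1,1)}$. Taking a further degeneration under a generic one-parameter subgroup of a Borel $\BB \subset GL_3 \times GL_\nnn \times GL_\nnn$ inside the stabilizer of $M_{\langle 3\nnn\nnn\rangle}$, one may assume $I$ is $\BB$-fixed, hence spanned in each multidegree by weight vectors.

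The structural constraints are then: (i) the Hilbert function bound $\dim (R/I)_{(d,e,f)} \leq r$ in every multidegree, (ii) the fact that $I_{(1,1,1)}$ is the sum of the images of the natural multiplication maps from $I_{(1,1,0)} \otimes C^*$, $I_{(1,0,1)} \otimes B^*$, $I_{(0,1,1)} \otimes A^*$, together with $I_{(1,0,0)} \otimes B^* \otimes C^*$ and its cyclic variants, and (iii) the apolarity condition that $M_{\langle 3\nnn\nnn\rangle}$ annihilates $I_{(1,1,1)}$. Enumerating $\BB$-fixed subspaces of the prescribed codimensions in $A^*, B^*, C^*$, computing the induced $(1,1,1)$ subspace of $I$, and checking against the apolarity condition reduces the problem to a finite combinatorial verification on weight configurations. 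The breakpoints at $\nnn = 14$ and $\nnn = 21$ in parts (1)--(3) reflect the $\nnn$-values at which new classes of extremal $\BB$-fixed configurations become admissible.

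For the asymptotic statement (4), the approach is to encode the problem as an asymptotic optimization: admissible $\BB$-fixed subspaces of $A^*, B^*, C^*$ of codimension approximately $\nnn^2 + \kappa \nnn$ are parametrized by sequences of weight cutoffs, and the lower bound on $\dim (R/I)_{(1,1,1)}$ induced by the multiplication maps grows as a quadratic function in these parameters, which must be minimized subject to the linear constraints coming from the Hilbert bounds in the mixed multidegrees. The critical constant $\kappa = \frac{16\sqrt{78}}{21} - \frac{32}{7}$ is obtained by solving the resulting constrained quadratic program, in direct analogy with the extraction of $3\sqrt{6} - 6$ in the $M_{\langle 2\nnn\nnn\rangle}$ analysis of Theorem \ref{2nnbnds}(3).

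The hardest step will be the combinatorial case analysis in (1)--(3): one must enumerate every $\BB$-fixed configuration that might evade both the Hilbert function bound and the apolarity condition, and verify incompatibility for the claimed $r$. This is most delicate in the transitional range $14 \leq \nnn \leq 20$, where the target lower bound is tight to within one, so even a single missed configuration would invalidate the result. The bulk of the technical work, including any necessary computer-algebra enumeration, should be concentrated here.
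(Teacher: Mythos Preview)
Your proposal has the right top-level architecture (border apolarity plus Borel-fixed degeneration), but it misidentifies both the relevant objects and the relevant test, and it proposes an enumeration where the paper instead carries out a structural reduction.

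First, by conciseness $I_{100}=I_{010}=I_{001}=0$, so there are no nontrivial $\BB$-fixed subspaces of $A^*,B^*,C^*$ to enumerate; the action is in $I_{110}\subset A^*\ot B^*$ and its cyclic variants, or dually in $E_{110}=I_{110}^\perp\supset M_{\langle 3\nnn\nnn\rangle}(C^*)$. Writing $E_{110}=(U^*\ot\Id_V\ot W)\op E_{110}'$ with $E_{110}'\subset U^*\ot\fsl(V)\ot W$ of dimension $\rho=r-\nnn^2$, the proof uses only the $(210)$ and $(120)$ tests (the maps $I_{110}\ot A^*\to S^2A^*\ot B^*$ and $I_{110}\ot B^*\to A^*\ot S^2B^*$), not the $(111)$ test you emphasize. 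Your assertion that $I_{(1,1,1)}$ \emph{equals} the sum of the multiplication images is also incorrect: the ideal merely contains that sum.

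Second, and more seriously, the paper does not enumerate $\BB$-fixed configurations for any $\nnn\geq 4$ here. It decomposes each candidate $E_{110}'$ into an \emph{outer structure} (the multiset of $\fsl(U)\op\fsl(W)$ weights, organized as a labeled Young diagram on an $\nnn\times\nnn$ grid) and an \emph{inner structure} (a $\BB$-fixed subspace of $\fsl_3$ of some dimension $j\leq 8$ at each grid site). Lemma~\ref{absl3} bounds the kernel contribution of a $j$-dimensional inner piece at site $(s,t)$ to the $(210)$-map by $a_js+b_j$ for eight explicit pairs $(a_j,b_j)$; by the $U^*\leftrightarrow W$ symmetry the $(120)$ contribution is $a_jt+b_j$. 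Lemma~\ref{alem2} then solves the resulting optimization over \emph{all} labeled Young diagrams at once, giving
\[
\min\{\tker(210),\tker(120)\}\leq\max_{1\leq j\leq 8}\Big\{\tfrac{a_j}{8j^2}\rho^2+\tfrac{a_j+b_j}{j}\rho\Big\}.
\]
Parts (1)--(3) follow by substituting each $\nnn$ into these eight explicit quadratics and checking when all are $<\nnn^2+\rho$; there is no case analysis on configurations, and the breakpoints at $\nnn=14,21$ are simply arithmetic thresholds in these inequalities, not the appearance of new extremal configurations. Part (4) comes from the $j=8$ term $\frac{21}{512}\rho^2+\frac{15+3\nnn}{8}\rho$, which dominates for large $\nnn$; the constant $\frac{16\sqrt{78}}{21}-\frac{32}{7}$ is the leading-order root of $\frac{21}{512}\rho^2+\frac{15+3\nnn}{8}\rho=\nnn^2+\rho$, not the output of a quadratic program over weight cutoffs in $A^*,B^*,C^*$.
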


\begin{remark} We have slightly sharper results for $4\leq \nnn\leq 20$, see\newline
https://www.math.tamu.edu/$\sim$jml/bapolaritycode.html.
\end{remark}

Previously it was only known that when $\nnn\geq 4$, 
$\ur(M_{\langle 3\nnn\nnn\rangle})\geq \nnn^2+2$ by     \cite[Rem. p175]{MR86c:68040}
(see \S\ref{Lickappen}).

Using \cite[Rem. p175]{MR86c:68040}, one obtains

\begin{corollary} 
\label{3nncor} 
For all $\nnn\geq 21$, $\ur(M_{\langle \mmm\nnn\nnn\rangle})\geq   \nnn^2+2 \nnn +\mmm-2$.
\end{corollary}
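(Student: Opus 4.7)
The plan is to deduce Corollary~\ref{3nncor} by combining the lower bound for $\ur(M_{\langle 3\nnn\nnn\rangle})$ established in Theorem~\ref{mnnthm}(3) with Lickteig's additive inequality from \cite[Rem.~p175]{MR86c:68040}, which is the very same remark already invoked in the discussion preceding the corollary (and which will be reviewed in \S\ref{Lickappen}). Lickteig's remark says, in essence, that enlarging $\mmm$ by one forces the border rank to grow by at least one, i.e.\ $\ur(M_{\langle \mmm+1,\nnn,\nnn\rangle}) \geq \ur(M_{\langle \mmm,\nnn,\nnn\rangle}) + 1$ (in appropriate ranges of parameters). So the corollary should follow by an immediate induction on $\mmm$ starting at $\mmm=3$.

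More concretely, the plan is as follows. First, I would take as the base case the bound $\ur(M_{\langle 3\nnn\nnn\rangle}) \geq \nnn^2 + 2\nnn + 1$ for $\nnn \geq 21$, which is exactly Theorem~\ref{mnnthm}(3). Next, for $\mmm \geq 4$, I would apply Lickteig's inequality $(\mmm-3)$ times to obtain
\begin{equation*}
    \ur(M_{\langle \mmm\nnn\nnn\rangle}) \geq \ur(M_{\langle 3\nnn\nnn\rangle}) + (\mmm-3) \geq \nnn^2 + 2\nnn + 1 + (\mmm-3) = \nnn^2 + 2\nnn + \mmm - 2,
\end{equation*}
which is the claimed inequality. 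The case $\mmm \leq 3$ is covered either by the base case directly ($\mmm=3$) or by the corresponding bounds for the smaller tensors.

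The only real point to check is that Lickteig's additive statement applies in the parameter regime $\mmm \geq 3$, $\nnn \geq 21$; this is precisely the content of the appendix section \S\ref{Lickappen} referenced in the proof of Theorem~\ref{mnnthm}, so I would simply cite it there. No new border apolarity computation is required: the corollary is purely a bootstrapping consequence of Theorem~\ref{mnnthm}(3) and a classical inequality. The expected ``main obstacle'' is really just the bookkeeping of ensuring Lickteig's inequality is stated in the right form to be iterated cleanly, but since the excerpt already relies on it to produce the $+2$ in the previously known bound $\ur(M_{\langle 3\nnn\nnn\rangle}) \geq \nnn^2 + 2$, the same mechanism applies verbatim.
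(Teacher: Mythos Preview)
Your proposal is correct and matches the paper's approach exactly: the paper simply says ``Using \cite[Rem.~p175]{MR86c:68040}, one obtains'' the corollary, meaning one iterates the inequality $\ur(M_{\langle \lll,\mmm,\nnn\rangle})\geq \ur(M_{\langle \lll-1,\mmm,\nnn\rangle})+1$ (proved in \S\ref{Lickappen} for all $\lll,\mmm,\nnn$, so no parameter restriction is needed) starting from the base case $\mmm=3$ given by Theorem~\ref{mnnthm}(3). One small caveat: your remark that ``the case $\mmm\leq 3$ is covered \ldots\ by the corresponding bounds for the smaller tensors'' is not quite right for $\mmm\leq 2$, since the paper does not establish $\ur(M_{\langle 2\nnn\nnn\rangle})\geq \nnn^2+2\nnn$; the corollary is implicitly for $\mmm\geq 3$.
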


\begin{remark} Koszul flattenings \cite{MR3376667} fail to give border rank lower bounds
for tensors in $A\ot B\ot C$ when the dimension of one of $A,B,C$ is much larger
than that of the other two. Theorems \ref{2nnbnds} and \ref{mnnthm}  show that the border apolarity method does not share this defect.
\end{remark} 

\subsection{Overview} 
In \S\ref{prelim} we review   terminology regarding border rank decompositions
of tensors and Borel fixed subspaces. We then  describe a  curve of multi-graded ideals one
may associate to a border rank decomposition. For   readers not familiar with representation
theory, we also review    Borel fixed (highest weight) subspaces.
In \S\ref{algsect} we describe
the border apolarity algorithm and accompanying tests. In \S\ref{mmultsect} we review
the matrix multiplication tensor. In \S\ref{mtwopf} we give our first hand-checkable
proof that $\ur(\Mtwo)=7$. In \S\ref{mthreepfsect} we describe the computation
to prove Theorems \ref{mthreethm} and  \ref{detthreethm}, which are   computer calculations, the code for
which is available at https://www.math.tamu.edu/$\sim$jml/bapolaritycode.html. In \S\ref{relrepsect} we discuss representation theory
relevant for applying the border apolarity algorithm to matrix multiplication, and use
it to get a shorter proof that $\ur(\Mtwo)=7$ and proofs of Theorems \ref{223thm} and \ref{2nnbnds}(1).
In \S\ref{outlinesect} we present the proofs of Theorems 
 \ref{2nnbnds}(3) and \ref{mnnthm}. 
In \S\ref{Lickappen}, for the convenience of the reader,  we give a proof of Lickteig's result that
$\ur(M_{\langle\lll,\mmm,\nnn\rangle})\geq \ur(M_{\langle\lll-1,\mmm,\nnn\rangle})+1$
for all $\lll,\mmm,\nnn$ which is used in the proof of Corollary \ref{3nncor}.

\subsection*{Acknowledgements} This project began when the third author  visited Institute of Mathematics of Polish Academy of Sciences (IMPAN), which he thanks for their hospitality,
  excellent environment for research, and support.  We are deeply indebted to Buczy\'{n}ska and Buczy\'{n}ski for sharing their preprint
with us and discussions. 
We thank Mateusz Micha{\l}ek for insight in settling Lemma \ref{lemma:opt}.

 \section{Preliminaries}\label{prelim}

\subsection{Definitions/Notation}

Throughout, $A,B,C,U,V,W$ will denote complex vector spaces respectively of dimensions $\aaa,\bbb,\ccc,\uuu,\bv,\bw$.
The dual space to $A$ is denoted $A^*$.
The identity map is denoted $\Id_A\in A\ot A^*$. For $X\subset A$, $X^\perp:=\{\a\in A^*\mid 
\a(x)=0\forall x\in X\}$ is its annihilator, and  $\langle X\rangle\subset A$ denotes the span of $X$.  
Projective space is  $\BP A= (A\backslash \{0\})/\BC^*$, and the Grassmannian of $r$ planes through
the origin is denoted $G(r,A)$, which we will view in its Pl\"ucker embedding $G(r,A)\subset \BP \La r A$.
The general linear group of invertible linear maps $A\ra A$ is denoted $GL(A)$
and the special linear group of determinant one linear maps is denoted $SL(A)$. 
 
For a set $Z\subset \BP A$, $\ol{Z}\subset \BP A$ denotes its Zariski closure,
$\hat Z\subset A$ denotes the cone over $Z$ union the origin, $I(Z)=I(\hat Z)\subset Sym(A^*)$ denotes
the ideal of $Z$,
and $\BC[\hat Z]=Sym(A^*)/I(Z)$, denotes the coordinate ring of $\hat Z$.
Both  $I(Z)$, $\BC[\hat Z]$ are $\BZ$-graded by degree.

One can define a $\BZ^3$-grading on the  ideals of subsets of $\BP A\times \BP B\times \BP C$.
%This is usually done by considering the sections of the line bundles
%$\cO_{\BP A}(i)\boxtimes\cO_{\BP B}(j)\boxtimes\cO_{\BP A}(k)$ but following
Here is an elementary way to define the grading, following
notes of Buczy\'{n}ski (personal communication):
Write ${\rm Irrel}:=\{ 0\op B\op C\} \cup \{ A\op 0\op C\}\cup \{ A\op B\op 0\}
\subset A\op B\op C$
and observe that
$\BP A\times \BP B\times \BP C=(A\op B\op C\backslash {\rm Irrel})/(\BC^*)^{\times 3}$.
Thus the closure of the  pullback    of any subset $Z\subset \BP A\times \BP B\times \BP C$
under the quotient map $q: (A\op B\op C)\backslash {\rm Irrel}\ra \BP A\times \BP B\times \BP C$
is invariant under the action of $(\BC^*)^{\times 3}$ and   $I(Z)=I(q^{-1}( Z))\subset
Sym(A\op B\op C)^*$
  is naturally $\BZ^3$-graded. 

Given $T\in A\ot B\ot C$, we may consider it as a linear map $C^*\ra A\ot B$, and we let $T(C^*)\subset A\ot B$
denote its image, and similarly for permuted statements.

\subsection{Border rank decompositions as curves in Grassmanians}\label{brasgrass}
A border rank $r$ decomposition of a tensor $T$  is normally viewed
as a curve $T(t)=\sum_{j=1}^r T_j(t)$ where each  $T_j(t)$ is rank one for all $t\neq 0$, and $\tlim_{t\ra 0}T(t)=T$.
It will be useful to change perspective, viewing a  border rank $r$ decomposition  of a tensor $T\in A\ot B\ot C$ as 
a curve $E_t\subset G(r,A\ot B\ot C)$ satisfying
\begin{enumerate}
\item for all $t\neq 0$, $E_t$ is spanned by $r$ rank one tensors, and 
\item $T\in E_0$.
\end{enumerate}

For example the border rank decomposition
$$a_1\ot b_1\ot c_2+ a_1\ot b_2\ot c_1+a_2\ot b_1\ot c_1=\tlim_{t\ra 0}\frac 1t[(a_1+ta_2)\ot (b_1+tb_2)\ot (c_1+tc_2) - a_1\ot b_1\ot c_1]
$$
may be rephrased as the curve
$$E_t=[(a_1\ot b_1\ot c_1)\ww (a_1+ta_2)\ot (b_1+tb_2)\ot (c_1+tc_2)]\subset G(2,A\ot B\ot C).
$$

Here 
$$
E_0=[(a_1\ot b_1\ot c_1)\ww (a_1\ot b_1\ot c_2+ a_1\ot b_2\ot c_1+a_2\ot b_1\ot c_1)].
$$

\subsection{Multi-graded ideal associated to a border rank decomposition}
Given a border rank $r$ decomposition $T=\tlim_{t\ra 0}\sum_{j=1}^r T_j(t)$, we have additional information:
Let 
$$I_t\subset Sym(A^*)\ot Sym(B^*)\ot Sym(C^*) 
$$
denote the $\BZ^3$-graded ideal of the set of  $r$ points $[T_1(t)]\sqcup\cdots
\sqcup [T_r(t)]$, where 
$I_{ijk,t}\subset S^iA^*\ot S^jB^*\ot S^kC^*$.
If the $r$ points are in general position, then   $\tcodim(I_{ijk,t})=r$ as long as
 $r\leq \tdim S^iA^*\ot S^jB^*\ot S^kC^*$
  (in our situation  $r$ will be sufficiently small so that  this will hold if at least two  of  $i,j,k$
are nonzero, see  e.g., \cite{MR3800460,MR3611482,2014arXiv1406.5145T}).
    For all $(ijk)$ with $i+j+k>1$, we may choose the curves such that  $\tcodim (I_{ijk})=r$   by \cite[Thm.~1.2]{BBapolar}.

Thus, in addition to $E_0=I_{111,0}^\perp$ defined in \S\ref{brasgrass}, we obtain a   limiting ideal $I$, where we define $I_{ijk}:=\tlim_{t\ra 0}I_{ijk,t}$ and the limit
is taken in the Grassmannian
$G(\tdim(S^iA^*\ot S^jB^*\ot S^kC^*)-r,S^iA^*\ot S^jB^*\ot S^kC^*)$.  
We remark that there are   subtleties here: the limiting ideal may  not be saturated. In particular,  the ideal of
the limiting scheme in the Hilbert scheme may  not agree with this limiting ideal. See \cite{BBapolar} for
a discussion. 
 
 Thus we may assume a  multi-graded ideal $I$ coming from    a border rank $r$ decomposition of a concise tensor $T$ satisfies the following     conditions:
\begin{enumerate}
\item[(i)] $I$ is contained in the annihilator of $T$. 
This condition says $I_{110}\subset T(C^*)^\perp$, $I_{101}\subset T(B^*)^\perp $, 
$I_{011}\subset T(A^*)^\perp$ and $I_{111}\subset T^\perp\subset A^*\ot B^*\ot C^*$.
\item[(ii)] For all $(ijk)$ with $i+j+k>1$, $\tcodim I_{ijk}=r$.  
\item[(iii)]  $I$ is an ideal, so the multiplication maps 
\be\label{ijkmap}
I_{i-1,j,k}\ot A^*\op I_{i,j-1,k}\ot B^* \op I_{i,j,k-1}\ot C^*\ra  S^iA^*\ot S^jB^*\ot S^kC^*
\ene
have image contained in $I_{ijk}$.
\end{enumerate}

One may  prove border rank lower bounds for $T$
by showing that for a given $r$,   no
such $I $ exists. For arbitrary tensors, we do not see any way to prove this, but for tensors with a 
nontrivial  symmetry group,
we have  a vast simplification of the problem as described in the next subsection.

\subsection{Lie's theorem and consequences} Lie's theorem may be stated as: 
Let $H$ be a  solvable group, let $W$ be an  $H$-module, and let $[w]\in \BP W$. Then
the orbit closure $\ol{H\cdot[ w]}$ contains an $H$-fixed point.

Assume $G_T$ is reductive (or contains a nontrivial reductive subgroup). Let $\BB_T\subset G_T$ be a maximal solvable subgroup, called a {\it Borel}
subgroup. By Lie's theorem and the Normal Form Lemma of \cite{MR3633766}, in order to prove $\ur(T)>r$, it is sufficient to disprove the existence
of a border rank decomposition   where $E_0$ is a $\BB_T$-fixed point of $\BP \La r(A\ot B\ot C)$.

By the same reasoning, as observed in \cite{BBapolar}, we may assume $I_{ijk}$ is $\BB_T$-fixed for all $i,j,k$. When $G_T$ is large, this can reduce
the problem to a finite, or nearly finite search.

Thus we may assume a  multi-graded ideal $I$ coming from    a border rank $r$ decomposition of $T$ satisfies the additional    condition:

\ \ \ \ (iv) Each $I_{ijk}$ is $\BB_T$-fixed.

As we explain in the next subsection, Borel fixed spaces are easy to list.

\subsection{Borel fixed  subspaces} For the reader's convenience, we review standard facts about Borel fixed subspaces.
In this paper only general and special  linear groups and   products of such appear. A Borel subgroup of $GL_m$ is just the group of invertible matrices
that are zero below the diagonal, and in products of general linear groups, the product of Borel subgroups is a Borel subgroup.
Let $\BC^m$ have basis $e_1\hd e_m$, with dual basis $e^1\hd e^m$. Assign $e_j$ weight $(0\hd 0,1,0\hd 0)$, where the $1$ is in 
the $j$-th slot and $e^j$ weight $(0\hd 0,-1,0\hd 0)$. 
  For vectors in $(\BC^m)^{\ot d}$,   $wt(e_1^{\ot a_1}\otc e_m^{\ot a_m})=(a_1\hd a_m)$  and the weight is unchanged under permutations of the $d=a_1+\cdots +a_m$ 
  factors.
  Partially order   the weights 
so that $(i_1\hd i_m)\geq (j_1\hd j_m)$ if $\sum_{\alpha=1}^si_{\alpha}\geq \sum_{\alpha=1}^sj_{\alpha}$ for all $s$.
The action of the Borel on a monomial $\mu$  sends it to a sum of monomials whose weights are   higher  than that
of $\mu$ in the partial order plus a monomial that is a scalar multiple of $\mu$.
Each irreducible $GL_m$ module appearing in the tensor algebra of $\BC^m$ has a unique highest weight which is given by a partition
$\pi=(p_1\hd p_m)$ and the module is denoted $S_{\pi}\BC^m$. Write $d=|\pi|=\sum p_i$. 
%Then
%$S_{\pi}\BC^m$ occurs in $(\BC^m)^{\ot d}$ with multiplicity $\tdim[\pi]$, where $[\pi]$
%is the irreducible module for the permutation group $\FS_d$ determined by $\pi$. 
See any of, e.g.,  \cite[\S 8.7]{MR3729273},
\cite[\S 9.1]{MR2265844}, or \cite[I.A]{MR1354144} for details. Let $\BT\subset GL_m$ denote the maximal torus of diagonal
matrices. A vector (or line) $w$  is a {\it    weight vector (line)}  
if the line $[w]$ is  fixed by the action of $\BT$.

We will  use $SL_m$ weights, which we write as $c_1\o_1+\cdots + c_{m-1}\o_{m-1}$, where the $\o_j$
are the fundamental weights. Here $wt(e_1)=\o_1$, $wt(e_m)=-\o_{m-1}$,  for $2\leq s\leq m-1$, 
$wt(e_s)=\o_s-\o_{s-1}$ and for all $j$,  $wt(e^j)=-wt(e_j)$. See the above references for explanations.

After fixing a (weight) basis of $\BC^m$,  an irreducible $G$-submodule $M$ of $(\BC^m)^{\ot d}$ has a basis of weight vectors, which
is unique up to scale  if $M$ is multiplicity free, i.e., there is at most one weight line
of  any given weight. In this case the $\BB$-fixed subspaces of dimension $k$, considered as elements of the
Grassmannian $G(k,M)$,  are just wedge products of  choices
of $k$-element subsets of the weight vectors  of $M$ such that no other element of $G(k,M)$, considered as a line in $\La kM$, has higher
weight in the partial order.
In the case a weight occurs with multiplicity in $M$, one has to introduce parameters in describing the subspaces.
In the case of direct sums of irreducible modules  $M_1\op M_2$, a subspace is $\BB$-fixed if it is 
spanned by weight vectors and, setting all the
$M_2$-vectors   in a basis of the subspace zero, what remains is a $\BB$-fixed subspace of $M_1$ and similarly
with  the roles of $M_1,M_2$ reversed.

In discussing weights, it is convenient to work with Lie algebras. Let $\fb$ denote the Lie algebra of $\BB$
and let $\fu\subset \fb$ be the space of   upper triangular matrices  with 
zero on the diagonal. We refer to elements of $\fu$ as {\it raising operators}. 
 A vector (or line) is a {\it  highest weight vector (line)}  
if it is a weight vector (line) annihilated by the action of   $\fu$.
A subspace of $M$ of dimension $k$  is $\BB$-fixed if and only if, considered as a line in 
$\La kM$, it is a highest weight line.

\begin{figure}[!htb]\begin{center}\label{uslvw222}
\begin{tikzpicture}
  \node (n000) at (0bp,0bp) {$x^2_1 \otimes y^2_1$};
  \node (n001) at (0bp,-35bp) {$x^2_1 \otimes y^2_2$};
  \node (n010) at (-70bp,-49bp) {$x^1_1 \otimes y^2_1$};
  \node (n011) at (-70bp,-84bp) {$x^1_1 \otimes y^2_2$};
  \node (n100) at (70bp,-49bp) {$x^2_1 \otimes y^1_1 - x^2_2 \otimes y^2_1$};
  \node (n101) at (70bp,-84bp) {$x^2_1 \otimes y^1_2 - x^2_2 \otimes y^2_2$};
  \node (n110) at (0bp,-98bp) {$x^1_1 \otimes y^1_1 - x^1_2 \otimes y^2_1$};
  \node (n111) at (0bp,-133bp) {$x^1_1 \otimes y^1_2 - x^1_2 \otimes y^2_2$};
  \node (n200) at (140bp,-98bp) {$x^2_2 \otimes y^1_1$};
  \node (n201) at (140bp,-133bp) {$x^2_2 \otimes y^1_2$};
  \node (n210) at (70bp,-146bp) {$x^1_2 \otimes y^1_1$};
  \node (n211) at (70bp,-181bp) {$x^1_2 \otimes y^1_2$};
  \draw [black,->] (n000) -> (n100);
  \draw [black,->] (n000) -> (n010);
  \draw [black,->] (n000) -> (n001);
  \draw [black,->] (n001) -> (n101);
  \draw [black,->] (n001) -> (n011);
  \draw [black,->] (n010) -> (n110);
  \draw [black,->] (n010) -> (n011);
  \draw [black,->] (n011) -> (n111);
  \draw [black,->] (n100) -> (n200);
  \draw [black,->] (n100) -> (n110);
  \draw [black,->] (n100) -> (n101);
  \draw [black,->] (n101) -> (n201);
  \draw [black,->] (n101) -> (n111);
  \draw [black,->] (n110) -> (n210);
  \draw [black,->] (n110) -> (n111);
  \draw [black,->] (n111) -> (n211);
  \draw [black,->] (n200) -> (n210);
  \draw [black,->] (n200) -> (n201);
  \draw [black,->] (n201) -> (n211);
  \draw [black,->] (n210) -> (n211);
\end{tikzpicture}
\caption{weight diagram for $U^*\ot \fsl(V)\ot W$ when $U=V=W=\BC^2$}
\end{center}
\end{figure}

\begin{example} When $U,V,W$ each have dimension $2$, Figure 1 gives  the weight diagram for 
$U^*\ot \fsl(V)\ot W$. Here, in each factor $\fu$ is spanned by the matrix
$\begin{pmatrix} 0&1\\0&0\end{pmatrix}$ and the labels on the edges indicate which
of the three raising  operators   acts to raise a weight (raising goes from
bottom to top). 
There is a unique $\BB$-fixed (highest weight) line, spanned by $x^2_1\ot y^2_1$,
(here $x^i_j=u^i\ot v_j$, $y^i_j=v^i\ot w_j$, and $z^i_j=w^i\ot u_j$)
three highest weight $2$-planes,  
$\langle x^2_1\ot y^2_1, x^1_1\ot y^2_1\rangle$, $\langle x^2_1\ot y^2_1, x^2_1\ot y^2_2\rangle$,
and $\langle x^2_1\ot y^2_1, x^2_1\ot y^1_1-x^2_2\ot y^2_1\rangle$,
four highest weight $3$-planes,
$\langle x^2_1\ot y^2_1, x^1_1\ot y^2_1, x^2_1\ot y^1_1-x^2_2\ot y^2_1\rangle$,
$\langle x^2_1\ot y^2_1,x^2_1\ot y^1_1-x^2_2\ot y^2_1, x^2_1\ot y^2_2\rangle$, 
$\langle x^2_1\ot y^2_1, x^1_1\ot y^2_1, x^2_1\ot y^2_2\rangle$, and 
$\langle x^2_1\ot y^2_1, x^2_1\ot y^1_1-x^2_2\ot y^2_1, x^2_2\ot y^1_1\rangle$,
etc..

\end{example}

\begin{figure}[!htb]\begin{center}\label{UotU2}
\begin{tikzpicture}
  \node (s1) at (0bp,0bp) {$u_1^2$};
  \node (s2) at (0bp,-30bp) {$u_1u_2$};
  \node (s3) at (40bp,-60bp) {$u_1u_3$};
  \node (s4) at (-40bp,-90bp) {$u_2^2$};
  \node (s5) at (0bp,-120bp) {$u_2u_3$};
  \node (s6) at (0bp,-150bp) {$u_3^2$};
  \draw [black,->] (s1) -> (s2);
  \draw [black,->] (s2) -> (s3);
  \draw [black,->] (s2) -> (s4);
  \draw [black,->] (s3) -> (s5);
  \draw [black,->] (s4) -> (s5);
  \draw [black,->] (s5) -> (s6);
  \node (a1) at (80bp,-30bp) {$u_1\wedge u_2$};
  \node (a2) at (80bp,-60bp) {$u_1\wedge u_3$};
  \node (a3) at (80bp,-120bp) {$u_2\wedge u_3$};
  \draw [black,->] (a1) -> (a2);
  \draw [black,->] (a2) -> (a3);
  \node (w1) at (140bp,0bp) {$2\omega_1$};
  \node (w2) at (140bp,-30bp) {$\omega_2$};
  \node (w3) at (140bp,-60bp) {$\omega_1-\omega_2$};
  \node (w4) at (140bp,-90bp) {$-2\omega_1+2\omega_2$};
  \node (w5) at (140bp,-120bp) {$-\omega_1$};
  \node (w6) at (140bp,-150bp) {$-2\omega_2$};
\end{tikzpicture}
\caption{weight diagram for $U\ot U$ when $U= \BC^3$.   There are $6$ distinct weights appearing, indicated on the right}
\end{center}
\end{figure}
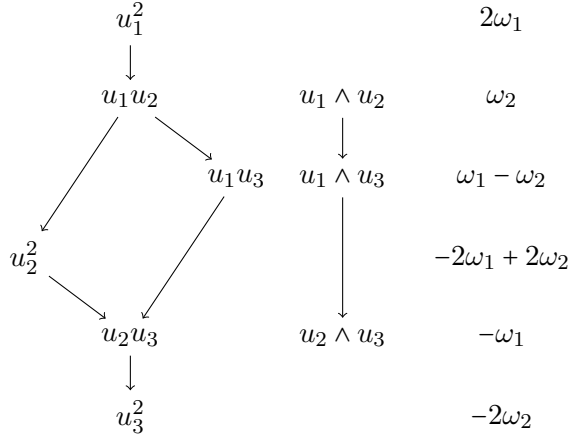

\begin{example}Let $\tdim U=3$. Figure 2 gives  the weight diagram for $U\ot U=S^2U\op \La 2 U$.
There are two $\BB$-fixed lines $\langle (u_1)^2\rangle$ and $\langle u_1\ww u_2\rangle$, there is a $1$-(projective) parameter $[s,t]\in\pp 1$ space of
$\BB$-fixed $2$-planes, $\langle (u_1)^2, su_1u_2+tu_1\ww u_2\rangle$ plus an isolated one $\langle u_1\ww u_2,u_1\ww u_3\rangle$ etc..
\end{example}

\section{The algorithm}\label{algsect}
{\bf Input}: An integer $r$ and a concise tensor $T\in A\ot B\ot C$ with symmetry group $G_T$ that contains a reductive
group (which by abuse of notation we denote $G_T$). 

{\bf Output}: Either a proof that $\ur(T)>r$ or a list of all Borel-fixed ideals that could potentially arise in a border rank
$r$ decomposition of $T$.

The following steps build an ideal $I$ in each multi-degree. We initially 
have $I_{100}=I_{010}=I_{001}=0$ (by conciseness), so the first spaces to
build are in total degree two. 

\begin{enumerate}

\item[(i)] 
%Decompose    $T(C^*)^{\perp}\subset A^*\ot B^*$ as a $G_T$
%module and draw the weight diagram of each irreducible component.
For each $\BB_T$-fixed  weight subspace $F_{110} $ of codimension $r-\ccc$ in $T(C^*)^{\perp }$ 
(and codimension $r$ in $A^*\ot B^*$)
compute the ranks of the maps 
\begin{align}
&\label{f210} F_{110}\ot A^*\ra S^2A^*\ot B^*, {\rm \  and}\\
&\label{f120} F_{110}\ot B^*\ra A^*\ot S^2B^*.
\end{align}
If both have images of codimension  at least  $r$, then   $F_{110}$  is a   candidate  
$I_{110}$. Call these maps the $(210)$ and $(120)$ maps and the rank conditions the
$(210)$ and $(120)$ tests.

\item[(ii)] Perform the analogous tests for potential $I_{101}\subset T(B^*)^\perp$
and $I_{011}\subset T(A^*)^{\perp}$ to obtain spaces $F_{101}$, $F_{011}$. 

\item[(iii)] For each triple $F_{110},F_{101},F_{011}$ passing the above tests, compute the rank of the map
\be\label{111map}
F_{110}\ot C^*\op F_{101}\ot B^*\op F_{011}\ot A^*\ra A^*\ot B^*\ot C^*.
\ene
If the codimension of the image is at least $r$, then one has a candidate  
triple. Call this map the $(111)$-map and the rank condition the $(111)$-test.
A space $F_{111}$ is a candidate for $I_{111}$ if it is of codimension $r$, contains the image of \eqref{111map}
and it is contained in $T^\perp$. 

\item[(iv)] 
%Decompose     $S^2A^*$ as a $G_T$
%module and draw the weight diagram of each irreducible component.
For each candidate triple $F_{110},F_{101},F_{011}$ obtained in the previous
step, and 
for each $\BB_T$-fixed subspace $F_{200}\subset S^2A^*$ of codimension $r$,
compute the rank of the maps $F_{110}\ot A^* \op F_{200}\ot B^*\ra S^2A^*\ot B^*$ and
$F_{101}\ot A^*\op F_{200}\ot C^* \ra S^2A^*\ot  B^*$. If the codimension of these images
is at least $r$, then one may add $F_{200}$ to the    candidate set.

Do the same for $\BB_T$-fixed subspaces $F_{020}$ and $F_{002}$, and collect all
total degree two candidate sets.

\item[(v)] Given an up until this point  candidate  set $\{F_{uvw}\}$ 
including   degrees   $(i-1,j,k)$,  $(i ,j-1,k)$, and $(i ,j,k-1)$,
%(ordered so
%$(ijk)\geq (i'j'k')$ if $i+j+k\geq i'+j'+k'$, $i+j\geq i'+j'$ and $i\geq i'$), 
compute the 
rank of the map
\be\label{ijkmap}
F_{i-1,j,k}\ot A^*\op F_{i,j-1,k}\ot B^*\ot F_{i,j,k-1}\ot C^*\ra S^iA^*\ot S^jB^*\ot S^kC^*.
\ene
If the codimension of the image of this map is less than $r$, the set is not a candidate.
Say  the codimension of the image is   $\xi\geq r$.
The image will be  $\BB_T$-fixed  by Schur's Lemma, as \eqref{ijkmap} is a $\BB_T$-module map. 
Each $(\xi-r)$-dimensional $\BB_T$-fixed subspace of the image (i.e.,  codimension $r$ $\BB_T$-fixed subspace of
$S^iA^*\ot S^jB^*\ot S^kC^*$ in the image) is a candidate
$F_{ijk}$.

\item[(vi)] If at any point there are no such candidates, we conclude $\ur(T)>r$. Otherwise, continue until stabilization occurs and one has a candidate ideal.
Repeat until one has all candidate ideals.  (Stabilization occurs at worst in multi-degree $(r,r,r)$,
see \cite{BBapolar}.)

\end{enumerate}

The output is either a certificate that $\ur(T)>r$ or  a collection  of   multi-graded ideals representing all possible
candidates for a $\BB_T$-fixed border rank decomposition. 
In current work with Buczynska et. al.   we are developing tests 
to determine if   a given  multi-graded ideal  comes from a border rank decomposition.

The algorithm above in total degree three   suffices to obtain the lower bounds proved in this article.

  One can perform the tests dually:
  
\begin{proposition}
The codimension of the image of the $(210)$-map is the dimension
of the kernel   
of the skew-symmetrization map 
\be\label{e210map}
F_{110}^\perp\ot A \ra \La 2 A\ot B.
\ene
The codimension of the image of the $(ijk)$-map  is the dimension of
\be\label{110inter} (F_{ij,k-1}^\perp\ot C)\cap (F_{i,j-1,k}^\perp\ot B)\cap(F_{i-1,j,k}^\perp\ot A).
\ene
\end{proposition}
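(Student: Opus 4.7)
The plan is to derive both identities from a single application of linear-algebra duality: for any linear map $\phi\colon V \to W$ of finite-dimensional vector spaces, $\tcodim(\tim\,\phi) = \dim(\ker \phi^*)$. I will compute the dual of each map, observe that it factors as a symmetric inclusion followed by a quotient, and read off its kernel as an intersection.

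For the $(210)$-map $\phi\colon F_{110}\ot A^* \to S^2A^*\ot B^*$, the identifications $(S^2A^*\ot B^*)^* = S^2A\ot B$ and $(F_{110}\ot A^*)^* = F_{110}^*\ot A = (A\ot B/F_{110}^\perp)\ot A$ exhibit the dual $\phi^*$ as the composition of the symmetric inclusion $S^2A\ot B \hookrightarrow A\ot B\ot A$ (viewing $S^2A\subset A\ot A$ and reshuffling) with the quotient $A\ot B\ot A \twoheadrightarrow (A\ot B/F_{110}^\perp)\ot A$. Hence $\ker \phi^* = (S^2A\ot B) \cap (F_{110}^\perp \ot A)$ inside $A\ot B\ot A$. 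Since $S^2A$ is precisely the kernel of the antisymmetrization $A\ot A \to \La 2 A$, this intersection equals the kernel of the restriction $F_{110}^\perp\ot A \to \La 2 A\ot B$, giving the first assertion.

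The general $(ijk)$-map is handled identically. Its dual splits as a triple of maps out of $S^iA\ot S^jB\ot S^kC$, the $A$-component being the composition
\[
S^iA\ot S^jB\ot S^kC \hookrightarrow S^{i-1}A\ot S^jB\ot S^kC\ot A \twoheadrightarrow F_{i-1,j,k}^*\ot A,
\]
where the first arrow is the symmetric inclusion $S^iA\hookrightarrow S^{i-1}A\ot A$ (after reshuffling the two $A$-factors) and the second is the quotient by $F_{i-1,j,k}^\perp\ot A$; the $B$- and $C$-components are defined analogously. Vanishing of $\sigma$ under the $A$-component is equivalent to its image lying in $F_{i-1,j,k}^\perp\ot A$, and similarly for the other two components. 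Therefore $\ker\phi^*$ equals the triple intersection in the statement, viewed inside $A^{\ot i}\ot B^{\ot j}\ot C^{\ot k}$ via the natural symmetric inclusions.

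The entire argument is essentially bookkeeping once the duality is applied; the main point requiring care is placing the three subspaces $F_{i-1,j,k}^\perp\ot A$, $F_{i,j-1,k}^\perp\ot B$, $F_{i,j,k-1}^\perp\ot C$ in a common ambient space before intersecting, which the symmetric inclusions $S^iA\hookrightarrow S^{i-1}A\ot A$ and their analogues do canonically.
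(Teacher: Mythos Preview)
Your proof is correct and follows essentially the same approach as the paper: both arguments dualize, identify the transpose as a symmetric inclusion followed by a quotient, and read off the kernel as the relevant intersection. You are in fact slightly more explicit than the paper about the common ambient space in which the triple intersection is taken, which is a welcome clarification.
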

\begin{proof}
The codimension of the image of the $(210)$-map is the dimension
of the kernel   
of its transpose,  $ S^2A\ot B\ra F_{110}^*\ot A$.
Since $A\ot A\ot B=A\ot (F_{110}^\perp \op F_{110}^*) =(S^2A\op \La 2 A)\ot B$,
the kernel equals $ (S^2A\ot B)\cap F_{110}^{\perp}$, which in turn 
is   the kernel of \eqref{e210map}.

The codimension of the image of the $(ijk)$-map is the dimension
of the kernel   
of its transpose,   
\begin{align*}
S^iA\ot S^jB\ot S^kC&\ra F_{ij,k-1}^*\ot C\op F_{i,j-1,k}^*\ot B\op F_{i-1,jk}^*\ot A\\
X&\mapsto {\rm Proj}_{F_{ij,k-1}^*\ot C}(X) \op  {\rm Proj}_{F_{i,j-1,k}^*\ot B}(X) \op 
 {\rm Proj}_{F_{i-1,jk}^*\ot A}(X)
\end{align*}
so $X$ is in the kernel if and only if all three projections are zero.
The kernels of the three projections are respectively 
$(F_{ij,k-1}^\perp\ot C)$, $(F_{i,j-1,k}^\perp\ot B)$, and $(F_{i-1.jk}^\perp\ot A)$, so we conclude.
\end{proof}

\section{Matrix multiplication}\label{mmultsect}
Let  $A=U^*\ot V$, $B=V^*\ot W$, $C=W^*\ot U$.
The matrix multiplication tensor
  $M_{\langle \uuu,\bv,\bw\rangle}\in A\ot B\ot C$ is the re-ordering of $\Id_U\ot \Id_V\ot \Id_W$.
Thus
  $G_{M_{\langle \uuu,\bv,\bw\rangle}}\supseteq GL(U)\times GL(V)\times GL(W)=:G$.
As a $G$-module $A^*\ot B^*= U\ot \fsl(V)\ot W^* \op U\ot \Id_V\ot W^*$.  
We have  $ M_{\langle \uuu,\bv,\bw\rangle}(C^*)= U^*\ot \Id_V\ot W$.
We fix bases and let $\BB$ denote the induced  Borel subgroup of   $G$.

For dimension reasons, it will be easier to describe $E_{ijk}:=F^{\perp}_{ijk}\subset S^iA\ot S^jB\ot S^kC$ than $F_{ijk}$.
Note that $E_{ijk}$ is $\BB$-fixed if and only if $E_{ijk}^\perp$ is.

Any candidate $E_{110}$ is an enlargement of $U^*\ot \Id_V\ot W$ obtained from choosing a $\BB$-fixed
$(r-\bw\uuu)$-plane inside $U^*\ot \fsl(V)\ot W$. Write $E_{110}=(U^*\ot \Id_V\ot W)\op E_{110}'$, where
$E_{110}'\subset U^*\ot \fsl(V)\ot W$ and $\tdim E_{110}'=r-\bw\uuu$.

We remark that since $\Mn$ has $\BZ_3$-symmetry (i.e., cyclic permutation of factors), to determine the candidate $I_{110}$, $I_{101}$ and $I_{011}$ it will suffice
to determine the candidate $I_{110}$'s.  Similarly, since $M_{\langle \nnn, \lll, \nnn\rangle}$
has $\BZ_2$-symmetry, the list of candidate $I_{110}$'s is isomorphic to the list of candidate $I_{011}$'s.

\section{$\Mtwo$}\label{mtwopf}
Here is a very short algebraic proof that $\ur(\Mtwo)=7$:

\begin{proof}
Here $\uuu=\bv=\bw=2$. We disprove border rank  at most six by showing no $\BB$-fixed
six dimensional $F_{110}$ (i.e., two dimensional $E_{110}'$)  passes both the $(210)$ and $(120)$ tests.
The weight diagram for $U^*\ot \fsl(V)\ot W$ appears in Figure 1.

There are only three $\BB$-fixed $2$-planes in $U^*\ot \fsl(V)\ot W$:
\begin{align*}
&\langle (u^2\ot v_1)\ot (v^2\ot w_1), (u^1\ot v_1)\ot (v^2\ot w_1)\rangle,\\
&\langle (u^2\ot v_1)\ot (v^2\ot w_1), (u^2\ot v_1)\ot (v^2\ot w_2)\rangle,\\
{\rm and\ }&\langle (u^2\ot v_1)\ot (v^2\ot w_1), (u^2\ot v_1)\ot (v^1\ot w_1)- (u^2\ot v_2)\ot (v^2\ot w_1)\rangle
\end{align*}
For the first, the rank of the
$40\times 24$ matrix of the map $E_{110}^\perp \ot A^* \ra S^2A^*\ot B^*$   is   $20>24-6=18$.
For the second, by symmetry,  the rank of the $(120)$-map is also $20$. For the third
 the rank of the $(210)$-map is $19$ and we conclude.
\end{proof}

\begin{remark} One can simplify the calculation of the rank of the map $E_{110}^\perp \ot A^* \ra S^2A^*\ot B^*$ by using
the map \eqref{e210map}.
 In the case above, the resulting
matrix is of size $24\times 24$.
The images of the basis vectors of $E_{110}\ot A$ in the case $E_{110}'= \langle x^2_1\ot y^2_1, x^1_1\ot y^2_1\rangle$ are
\begin{align*}
&x^1_1\ww x^2_1\ot y^2_1, x^1_2\ww x^2_1\ot y^2_1, x^2_2\ww x^2_1\ot y^2_1,\\
&  x^1_2\ww x^1_1\ot y^2_1, x^2_2\ww x^1_1\ot y^2_1,\\
&  x^1_1\ww (x^1_1\ot y^1_1+x^1_2\ot y^2_1)  , x^1_2\ww  (x^1_1\ot y^1_1+x^1_2\ot y^2_1),
 x^2_1\ww (x^1_1\ot y^1_1+x^1_2\ot y^2_1)  , x^2_2\ww  (x^1_1\ot y^1_1+x^1_2\ot y^2_1),\\
& x^1_1\ww  (x^2_1\ot y^1_1+x^2_1\ot y^2_1),x^1_2\ww  (x^2_1\ot y^1_1+x^2_1\ot y^2_1), 
 x^2_1\ww  (x^2_1\ot y^1_1+x^2_2\ot y^2_1),x^2_2\ww  (x^2_1\ot y^1_1+x^2_2\ot y^2_1),\\
&  x^1_1\ww (x^2_1\ot y^1_2+x^2_2\ot y^2_2)  , x^1_2\ww  (x^2_1\ot y^1_2+x^2_2\ot y^2_2), x^2_1\ww  (x^2_1\ot y^1_2+x^2_2\ot y^2_2),
x^2_2\ww  (x^2_1\ot y^1_2+x^2_2\ot y^2_2) \\
&  x^1_1\ww (x^2_1\ot y^1_2+x^2_2\ot y^2_2)  , x^1_2\ww  (x^2_1\ot y^1_2+x^2_2\ot y^2_2), x^2_1\ww  (x^2_1\ot y^1_2+x^2_2\ot y^2_2),
x^2_2\ww  (x^2_1\ot y^1_2+x^2_2\ot y^2_2) 
\end{align*}
and if we remove $x^2_1\ww  (x^2_1\ot y^1_1+x^2_2\ot y^2_1)$ we obtain a set of $20$ independent vectors.
In \S\ref{bvtwo} we give a further simplification of this calculation.
\end{remark}

\section{Explanation of the proofs  of Theorems \ref{mthreethm} and \ref{detthreethm}}\label{mthreepfsect}
The actual proofs to these theorems are in the code at the webpage 

https://www.math.tamu.edu/$\sim$jml/bapolaritycode.html.

What follows are explanations of what is carried out.

In the case of $\Mthree$,
  the weight zero subspace of  $\fsl_3$ has dimension two, so there are  
$\BB$-fixed  spaces  of dimension $7=16-4$
complementary to $U^*\ot \Id_V\ot W$ in $A\ot B$ that arise in positive dimensional
families. Fortunately the  set of  $7$-planes that   pass the
$(210)$ and $(120)$ tests is finite. 
When there are no parameters present, these tests consist of 
computing the ranks of the
$144\times 405$ matrices of $E_{110}^\perp \ot A^*\ra S^2A^*\ot B^*$, and 
$E_{110}^\perp \ot B^*\ra  A^*\ot S^2B^*$. When there are parameters, one determines   the ideal
in which the rank drops to the desired value.
There are eight $7$-planes that do pass the test, giving rise to $512$ possible triples (or $176$ triples
taking symmetries into account). Among the candidate triples, none pass  the $(111)$-test.

We now describe the relevant module structure for the determinant:
Write $U,V=\BC^m$ and $A_1=\cdots =A_m=U\ot V$. The determinant  $\tdet_m$, considered as
a tensor, spans  the line $\La m U\ot \La m V\subset A_1\otc A_m$.
Explicitly, letting $A_\alpha$ have basis $x^{\alpha}_{ij}$,   
$$\tdet_m=\sum_{\s,\t\in \FS_m}\tsgn(\t) x^1_{\s(1)\t(1)}\otc x^m_{\s(m)\t(m)}= \sum_{\s,\t\in \FS_m}\tsgn(\s) x^1_{\s(1)\t(1)}\otc x^m_{\s(m)\t(m)}.
$$

We will be concerned with the case $m=3$, and we write $A_1\ot A_2\ot A_3=A\ot B\ot C$. As a tensor, 
$\tdet_3$ is invariant under $(SL(U)\times SL(V))\rtimes \BZ_2$ as well as $\FS_3$.
In particular, to determine the candidate $E_{110}$'s it is sufficient to look in $A\ot B$,
which, as an $SL(U)\times SL(V)$-module is $U^{\ot 2}\ot V^{\ot 2}
=S^2U\ot S^2V \op S^2U\ot \La 2 V\op \La 2U\ot S^2 V \op \La 2 U\ot \La 2 V$, and $\tdet_3(C^*)=\La 2 U\ot \La 2 V$.

In the case of $\tdet_3$, 
 each of the three modules in  the complement to $\tdet_3(C^*)$ in $A\ot B$ are multiplicity free, 
 but there are weight multiplicities up to three, e.g.,
$\langle u_1u_2\ot v_1v_2, u_1u_2\ot v_1\ww v_2,  u_1\ww u_2\ot v_1v_2\rangle$ all have weight $(110|110)$.
We   examine all $7$-dimensional $\BB$-fixed subspaces of $S^2U\ot S^2V \op S^2U\ot \La 2 V\op \La 2U\ot S^2 V$,
which occur in positive dimensional families.
There are four candidates passing the $(210)$ and $(120)$ tests, but no triples passed the $(111)$ test.

In both cases, 
for the $E_{110}'$ with parameters, to perform the test we first perform  row reduction by constant entries. This usually reduces the problem
enough to take minors even with parameters. If it does not, 
we use the following algorithm, which effectively allows us to do
row reduction:
  First,  generalize to matrix entries in some quotient of
some ring of fractions of the polynomial ring, say $R$.
If there is a matrix entry which is a unit, pivot by it, reducing the
problem.  
Otherwise, select a nonzero entry, say $p$.  
Recursively compute the target ideal in two cases:
1. Pass to $R/(p)$, the computation here is smaller because the entry is zeroed.
2. Pass to $R_p$, the computation here is smaller because now $p$ is a
unit, and one can pivot by it.
Finally lift the ideals obtained by 1 and 2 back to $R$, say to $J_1$ and $J_2$,
and take $J_1J_2$.  Its zero set is the rank $<r$ locus and computing with it is tractable.

\section{Representation theory relevant for matrix multiplication}\label{relrepsect}
 Theorems \ref{223thm} and \ref{2nnbnds}(1),(2)  may also be proved using computer calculations but we present hand-checkable proofs to
both illustrate the power of the method and lay groundwork for future results. This section 
establishes the representation theory needed  for those
proofs. 
  
We have the following decompositions as $SL(U)\times SL(V)$-modules:
(note $V_{\o_2+\o_{\bv-1}}$ does not appear when $\bv=2$, and when $\bv=3$, $V_{\o_2+\o_{\bv-1}}=V_{2\o_2}$):
\begin{align}
\La 2(U^*\ot V)\ot V^*&=(S^2U^*\ot V)\op 
(\La 2  U^*\ot V)\op 
(S^2U^*\ot V_{\o_2+\o_{\bv-1}})\op 
(\La 2U^*\ot V_{2\o_1+\o_{\bv-1}})\\ 
S^2(U^*\ot V)\ot V^*&=(S^2U^*\ot V_{2\o_1+\o_{\bv-1}})\op 
(\La 2  U^*\ot V_{\o_2+\o_{\bv-1}})\op 
(S^2U^*\ot V)\op 
(\La 2U^*\ot V )\\ 
A\ot T(C^*)&=(U^*\ot V)\ot (U^*\ot \Id_V\ot W) = (S^2U^*\ot V\ot W)\op 
(\La 2U^*\ot V \ot W)\\ 
\label{vslv} V\ot \fsl(V)&=V_{2\o_1+\o_{\bv-1}}\op V_{\o_2+\o_{\bv-1}}\op V,\\
\nonumber &\ \\
\label{twelve}
(U^*\ot V)\ot (U^*\ot \fsl(V))&=
(S^2U^*\ot V_{2\o_1+\o_{\bv-1}})\op 
(\La 2  U^*\ot V_{2\o_1+\o_{\bv-1}}) \\
&\nonumber
\op(S^2U^*\ot V )\op 
(\La 2U^*\ot V )\\
&\nonumber
\op
(S^2U^*\ot V_{\o_2+\o_{\bv-1}})\op 
(\La 2U^*\ot V_{\o_2+\o_{\bv-1}}).
\end{align}
Note that 
$$\textstyle \tdim(V_{2\o_1+\o_{\bv-1}}) = \frac {1}{2} \bv^3+ \frac{1} {2} \bv^2 -\bv , \ \ \ \tdim(
  V_{\o_2+\o_{\bv-1}} )=\frac {1}{2}\bv^3-\frac{1}{ 2}\bv^2-\bv.
$$

The map $(U^*\ot V)\ot (U^*\ot \Id_V\ot W)\ra \La 2 (U^*\ot V)\ot (V^*\ot W)$ is injective, which implies:

\begin{proposition}\label{210kerprop}
Write $E_{110}:=M_{\langle \uuu,\bv,\bw\rangle}(C^*)\op E_{110}'$. The dimension of the kernel
of the  map \eqref{e210map} $E_{110}\ot A \ra \La 2 A\ot B$
equals the dimension of the kernel of 
the map
\be\label{210map}
E_{110}'\ot A \ra S^2U^*\ot V_{\o_2+\o_{\bv-1}}\ot W\op \La 2U^*\ot V_{2\o_1+\o_{\bv-1}}\ot W.
\ene
In other words, the dimension of the kernel is the dimension of
$$
(E_{110}'\ot A )\cap [ (U^*)^{\ot 2}\ot V\ot W\op S^2U^*\ot V_{2\o_1+\o_{\bv-1}}\ot W\op \La 2 U^*\ot V_{\o_2+\o_{\bv-1}}\ot W].
$$
\end{proposition}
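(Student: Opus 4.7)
My plan is to exploit the splitting $E_{110} = M_{\langle \uuu,\bv,\bw\rangle}(C^*) \op E_{110}'$ together with the decomposition $V \ot V^* = \fsl(V) \op \BC\,\Id_V$ implicit in the ambient $A \ot B$. Write $T_0 := M_{\langle \uuu,\bv,\bw\rangle}(C^*) = U^* \ot \BC\,\Id_V \ot W$ and let $\phi : E_{110} \ot A \to \La^2 A \ot B$ denote the skew-symmetrization map. First I would show that $\phi|_{T_0 \ot A}$ is an isomorphism onto the submodule
\[
W_0 := (S^2 U^* \ot V \ot W) \op (\La^2 U^* \ot V \ot W) \subset \La^2 A \ot B.
\]
Injectivity is recorded just before the proposition, and the image identification follows from a Schur-lemma argument: $T_0 \ot A \cong (U^*)^{\ot 2} \ot V \ot W$ is an $SL(U)\times SL(V)$-module whose only $SL(V)$-isotypic type is $V$, so by equivariance its image under $\phi$ must lie in the $V$-isotypic summands of the multiplicity-free decomposition (1) of $\La^2 A \ot B$, namely $W_0$; the matching dimension $\uuu^2\bv\bw$ forces surjectivity.

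Next, let $\pi$ be the projection from $\La^2 A \ot B$ onto the complementary summand
\[
W_0^{c} := (S^2 U^* \ot V_{\o_2+\o_{\bv-1}} \ot W) \op (\La^2 U^* \ot V_{2\o_1+\o_{\bv-1}} \ot W).
\]
For $(t,e) \in T_0 \ot A \op E_{110}' \ot A$, the identity $\phi(t+e) = \phi(t) + \phi(e)$ with $\phi(t) \in W_0$ and $W_0 \cap W_0^c = 0$ shows that $\phi(t+e) = 0$ forces $\pi\phi(e) = 0$, and then $t$ is uniquely determined by $e$ via the inverse of $\phi|_{T_0 \ot A}$. Projection onto the second summand therefore identifies $\ker\phi$ with $\ker(\pi \circ \phi|_{E_{110}'\ot A})$, yielding the first equality in the proposition.

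For the second reformulation as an intersection, I would rewrite $\ker(\pi \circ \phi|_{E_{110}'\ot A}) = (E_{110}'\ot A) \cap \phi^{-1}(W_0)$ inside $A \ot A \ot B$. Since $\phi$ is the projection with kernel $S^2 A \ot B$, one has $\phi^{-1}(W_0) = S^2 A \ot B + W_0$, and intersecting with the ambient module $(U^*)^{\ot 2} \ot \fsl(V) \ot V \ot W$ of $E_{110}' \ot A$ (using $\fsl(V) \ot V = V_{2\o_1+\o_{\bv-1}} \op V_{\o_2+\o_{\bv-1}} \op V$ from (4)) and comparing isotypic pieces against decompositions (2) and (5), one reads off
\[
\phi^{-1}(W_0) \cap \bigl((U^*)^{\ot 2} \ot \fsl(V) \ot V \ot W\bigr) = (U^*)^{\ot 2} \ot V \ot W \,\op\, S^2 U^* \ot V_{2\o_1+\o_{\bv-1}} \ot W \,\op\, \La^2 U^* \ot V_{\o_2+\o_{\bv-1}} \ot W.
\]
The main obstacle is this last step of isotypic bookkeeping: $V$ appears with multiplicity two in $V^{\ot 2}\ot V^*$ (once from $S^2 V \ot V^*$ and once from $\La^2 V \ot V^*$), and one must verify that the full $V$-isotypic of $A \ot A \ot B$ lies in $S^2 A \ot B + W_0$, which is what allows both $S^2 U^*$ and $\La^2 U^*$ contributions to appear in the $(U^*)^{\ot 2} \ot V \ot W$ summand of the displayed subspace.
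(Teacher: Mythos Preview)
Your proof is correct and follows the paper's approach: the paper records only that injectivity of $\phi|_{T_0\ot A}$ implies the first assertion and that the ``in other words'' follows from Schur's lemma via decomposition \eqref{twelve}, leaving implicit both the image identification $\phi(T_0\ot A)=W_0$ (which, together with the dimension count, upgrades injectivity to the isomorphism your block argument needs) and the multiplicity bookkeeping for the $V$-isotypic component. You have correctly supplied both missing details, and your resolution of the ``main obstacle''---that the full $V$-isotypic of $A\ot A\ot B$ lies in $S^2A\ot B \oplus W_0$ because one copy sits in each summand---is exactly the point.
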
 

The \lq\lq in other words\rq\rq\ assertion follows by applying Schur's lemma using  \eqref{twelve} as 
the map  \eqref{210map} is equivariant.

\begin{remark}\label{zerorem}
The included module $V \subset V\ot \fsl(V) $ has basis
$\sum_{j\neq i} [v_j\ot (v_i\ot v^j)-  v_i\ot (v_j\ot v^j)]  -\frac 1{\bv-1}  v_i\ot v_i\ot v^i$.
To see this set $i=1$ and note that the vector is contained in $V\ot \fsl(V)$
and is annihilated by raising operators. 
In particular, the $i$-th basis vector is a linear combination of all monomials
in $ V\ot \fsl(V) $
of the same weight as $v_i$. We conclude that there is no contribution to the module 
$U^*\ot U^*\ot V\ot W$ in the kernel of the $(210)$ map of  $V$-weight
$v_i$ unless all vectors of a   weight $v_i$ appear
in $A\ot E_{110}'$.
\end{remark}

\begin{remark}\label{uwsymrem} 
In what follows, when $\tdim U=\tdim W=\nnn$, we will utilize the  $U^*\leftrightarrow W$ symmetry,
which also exchanges $V$ and $V^*$ to restrict attention to the $(210)$-map and to
impose symmetry.
\end{remark} 

\subsection{Case $\uuu=\bv=2$ and  $\Mtwo$ revisited}
Consider the special case $\uuu=\bv=2$:
We have the following images in $\La 2U^*\ot S^2V\ot V^*\ot W$:

For the highest weight vector $x^2_1\ot y^2_1$ times the four basis vectors of $A$
(with their $\fsl(V)$-weights in the second column, where we suppress the $\o_1$ from the notation),   the image is spanned by
\begin{align*}
x^1_1\ww x^2_1\ot y^2_1 &  \ \ 3\\
x^1_2\ww x^2_1\ot y^2_1 &  \ \ 1
\end{align*}
(Note, e.g., $x^2_2\ot x^2_1\ot y^2_1$ maps to zero as $u^2\ot u^2$ projects to
zero in $\La 2 U^*$.)
For  $x^2_1\ot y^1_1-x^2_2\ot y^2_1$ (the  lowering of $x^2_1\ot y^2_1$ under $\fsl(V)$), the image is spanned by 
\begin{align*}
x^1_1\ww (x^2_1\ot y^1_1-x^2_2\ot y^2_1) &  \ \ 1\\
x^1_2\ww (x^2_1\ot y^1_1-x^2_2\ot y^2_1) &  \ \ -1
\end{align*}
Observe that since $W$ has nothing to do with the map, we don't need
to compute the image of, e.g., $A\ot x^2_1\ot y^2_2$ to know its contribution,
as it must be the same dimension as that of $A\ot x^2_1\ot y^2_1$, just with a different $W$-weight.

These remarks already give an even shorter proof that $\ur(\Mtwo)=7$. Were it
$6$, $E_{110}'$ would have dimension two, it would be spanned by
the highest weight vector and one lowering of it, and in order to be a candidate,
its image in $\La 2U^*\ot S^3V\ot W$ would have to have dimension at
most two.
Taking $E_{110}'=\langle 
x^2_1\ot y^2_1, x^2_1\ot y^1_1-x^2_2\ot y^2_1\rangle$, the image
of \eqref{210map}  has dimension three.
Taking 
$E_{110}'=\langle 
x^2_1\ot y^2_1, x^2_1\ot y^2_2\rangle$, the image of \eqref{210map}  has dimension four.
Finally, taking
$E_{110}'=\langle 
x^1_1\ot y^2_1, x^2_1\ot y^2_1\rangle$, by symmetry (swapping the roles
of $U^*$ and $W$, which corresponds to taking transpose), the image of the
$(120)$-version of \eqref{210map}  must have dimension four and we conclude.

\subsection{Proof of Theorem \ref{223thm}} Here we break symmetry, taking  $\uuu=2$, $\bw= 3$, $\bv=2$.
We   show that there is no $E_{110}'$ of dimension $3=9-6$ passing the
$(210)$ and $(120)$ tests.
  There are eight  $\BB$-fixed $3$-planes in  $U^*\ot \fsl(V)\ot W$:
  three with just lowerings in $\fsl(V)$, four with one lowering in $\fsl(V)$ and one in either $U^*$ or $W$,
and one with no lowering in $\fsl(V)$  and one lowering each in $U^*,W$.
The only one passing the $(210)$-test is 
$\langle x^2_1\ot y^2_1, x^1_1\ot y^2_1, x^2_1\ot y^1_1-x^2_2\ot y^2_1\rangle$
where the kernel is $9$-dimensional. But for the $(120)$-test, the kernel is
$7$-dimensional and we conclude.
\qed

\subsection{Proof of   Theorem \ref{2nnbnds}(1),(2)}\label{pf2nnsect}
For Theorem \ref{2nnbnds}(1), 
   $\uuu=\bw=3$, $\bv=2$,  and $r=13$.  There are nine  $\BB$-fixed   four-dimensional 
subspaces of $U^*\ot \fsl(V)\ot W$, namely
\begin{align*}
&\langle x^3_1\ot y^2_1, x^3_2\ot y^2_1-x^3_1\ot y^1_1, x^3_1\ot y^1_1,x^2_1\ot y^2_1 \rangle,
\langle x^3_1\ot y^2_1, x^3_2\ot y^2_1-x^3_1\ot y^1_1, x^3_1\ot y^1_1,x^3_1\ot y^2_1 \rangle,
\\
&\langle x^3_1\ot y^2_1, x^3_2\ot y^2_1-x^3_1\ot y^1_1, x^2_1\ot y^2_1, x^3_1\ot y^2_1 \rangle,\\
&
\langle x^3_1\ot y^2_1, x^3_2\ot y^2_1-x^3_1\ot y^1_1, x^2_1\ot y^2_1, x^2_2\ot y^2_1-x^2_1\ot y^1_1  \rangle,
\langle x^3_1\ot y^2_1, x^3_2\ot y^2_1-x^3_1\ot y^1_1, x^3_1\ot y^2_2, x^3_2\ot y^2_2-x^3_1\ot y^1_2 \rangle,
\\
&
\langle x^3_1\ot y^2_1, x^3_2\ot y^2_1-x^3_1\ot y^1_1, x^2_1\ot y^2_1, x^1_1\ot y^2_1, \rangle,
\langle x^3_1\ot y^2_1, x^3_2\ot y^2_1-x^3_1\ot y^1_1, x^3_1\ot y^2_2, x^3_1\ot y^2_3, \rangle,\\
&\langle x^3_1\ot y^2_1,   x^2_1\ot y^2_1, x^1_1\ot y^2_1,x^3_1\ot y^2_2 \rangle,
\langle x^3_1\ot y^2_1, x^2_1\ot y^2_1 , x^3_1\ot y^2_2, x^3_1\ot y^2_3  \rangle.
\end{align*}
Here, to be a candidate $E_{110}'$, the map \eqref{210map} needs to have rank at most $11$.
Using the $U^*\leftrightarrow W$ symmetry, we see none of them pass the test:
they all have either \eqref{210map} or its $(120)$ version   of rank at least  $12$:
if we take $x^3_1\ot y^2_1$, two lowerings by $\fsl(W)$ and one by $\fsl(V)$ or $\fsl(U)$,
\eqref{210map}  has rank $14$. If we take two lowerings by $\fsl(V)$ and
one by $\fsl(W)$ \eqref{210map}   has rank $12$. By symmetry this covers all cases.\qed

For Theorem \ref{2nnbnds}(2), the result follows by an exhaustive computer search.

\section{Proofs of Theorems \ref{2nnbnds}(3) and \ref{mnnthm}}\label{outlinesect}

We   reduce the problem from upper-bounding  the kernels of the $(210)$ and $(120)$ maps
at arbitrary $\BB$-fixed spaces to a tractable computation in the following way.

Let $E_{110}' \subset U^* \ot \fsl(V) \ot W$ be a $\BB$-fixed subspace. Define
the {\it outer structure} of $E_{110}'$ to be the set of $\fsl(U)\oplus
\fsl(W)$ weights appearing in $E_{110}'$, counted with multiplicity. We identify the $\fsl(U)$ weights of $U^*$ and the
$\fsl(W)$ weights of $W$ each with $\{1,\ldots,\nnn\}$, where  $1$ corresponds to the
highest weight. In this way we consider the outer structure of $E_{110}'$ as a
subset of an $\nnn\times \nnn$ grid, with grid points possibly taken with
multiplicity.
 (For the purposes of the lemmas in section
\ref{twolems}, we view the grid as right justified to facilitate use of the
language of Young diagrams, but when viewed as weight diagrams, it is best
viewed as diamond shape with the $(1,1)$-vertex at the summit.)
We speak of the {\it inner structure} of $E_{110}'$ to be the particular weight
spaces which occur at each weight $(s,t) \in \nnn\times \nnn$. The set of  possible inner
structures over a grid point $(s,t)$ corresponds to the set of 
$\BB$-fixed   subspaces of   $\fsl(V)$.

We may filter $E_{110}'$ by $\BB$-fixed subspaces
such that  each quotient corresponds to the inner structure contribution over some site
$(s,t)$. Call such a filtration {\it admissible}. The kernel dimension of
either the $(210)$ or the $(120)$ map may be written as a telescoping sum of
differences of kernel dimensions corresponding to successive terms of such a
filtration. Thus, with respect to an admissible filtration, we may speak of the
contribution to the total kernel from the inner structure at grid point
$(s,t)$.  
We will bound the minimum of the kernel dimensions of the $(210)$ and
$(120)$ maps by first upper bounding the contribution to each kernel from the inner
structure at $(s,t)$ by a function of only $s$, $t$, and $j$, the inner structure
dimension. In particular, we obtain bounds independent of the outer structure
and the particular admissible filtration used to develop the dimension of the
total kernel. 
For $\fsl_2$, this is Lemma \ref{absl2} and for $\fsl_3$, this is
Lemma \ref{absl3}.
We then obtain bounds on the minimum of the kernel dimensions by
solving the resulting optimization problem over the possible outer structures
in Lemma \ref{alem2}.

\begin{lemma}\label{absl2}When $\tdim V=2$, $\tdim U=\tdim W=\nnn$, the differences in the dimensions of the kernels of the $(210)$-maps from a filtrand $\Sigma\subset E_{110}'$
such that the previous filtrand omits the   subspace at site $(s,t)$  and the dimension of the quotient
of the filtrands is $j$, equals the  function $a_js+b_j$ where 
\begin{center}\begin{tabular}{ccc}  
$j$ & $a_j$ &   $b_j$\\
\midrule
1 & $2$ &   $0$  \\
2 & $3$ &   $\nnn$   \\
3 & $4$ &  $2\nnn$.\\ 
\end{tabular}\end{center}
\end{lemma}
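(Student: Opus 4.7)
The strategy is to use Schur theory to reduce the $(210)$-map's kernel to a single rank on one target Schur component, and then track the kernel increment site-by-site along an explicit admissible filtration.

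First, I apply Proposition \ref{210kerprop}. When $\bv = 2$ one has $V \ot \fsl(V) = V_{3\o_1} \op V$ (the $V_{\o_2+\o_{\bv-1}}$ summand is absent), and Schur's lemma applied to the decompositions displayed immediately before the proposition shows that the restriction of the $(210)$-map to $M_{\langle \uuu,\bv,\bw\rangle}(C^*) \ot A$ is injective with image the entire $V$-isotypic part of $\La 2 A \ot B$. Hence the unique Schur component of $\La 2 A \ot B$ outside this image is $\La 2 U^* \ot V_{3\o_1} \ot W$, and a rank--nullity count gives
\[
\dim \ker(210)|_{(M_{\langle \uuu,\bv,\bw\rangle}(C^*) \op E_{110}') \ot A} \;=\; 2\nnn \cdot \dim E_{110}' \;-\; r_1(E_{110}'),
\]
where $r_1(E_{110}')$ is the rank of the induced projection $E_{110}' \ot A \to \La 2 U^* \ot V_{3\o_1} \ot W$.

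Next, fix the admissible filtration that processes sites $(s,t)$ in order of increasing $s$, adding the full $j$-dimensional inner structure $Q$ at $(s,t)$ in one step. The increment is $\Delta_k = 2\nnn\, j - \Delta r_1$, so the task reduces to showing $\Delta r_1 = (j+1)(\nnn - s)$; this rewrites as $\Delta_k = (j+1) s + (j-1)\nnn$, matching the table with $a_j = j+1$ and $b_j = (j-1)\nnn$. The image of $Q \ot A$ in $\La 2 U^* \ot V_{3\o_1} \ot W$ lives in terms $(u^s \ww u^a) \ot \kappa \ot w_t$ with $\kappa \in S^3 V \cong V_{3\o_1}$. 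The $a = s$ terms vanish; the $a < s$ terms are already in the image of $\Sigma_{k-1} \ot A$, because the higher-weight sites $(s',t)$ with $s' < s$ have already been processed with their full $\fsl(V)$-inner structures and so contribute all four $S^3 V$-directions at each wedge $u^{s'} \ww u^s$. Only the $\nnn - s$ values $a \in \{s+1, \dots, \nnn\}$ give new image.

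For each such $a$, the new contribution to $S^3 V$ is the span of the $S^3 V$-projections of $v_b \ot M$ as $b$ ranges over $\{1,2\}$ and $M$ over $Q$. Using the weight basis $k_3, k_1, k_{-1}, k_{-3}$ of $V_{3\o_1}$ (of $SL_2$-weights $\pm 3\o_1, \pm \o_1$), explicit computation shows: for $j=1$ (only the highest-weight vector $e^1_2 \in Q$) the span is $\langle k_3, k_1 \rangle$, dimension $2$; adjoining the Cartan element $H = e^1_1 - e^2_2$ (case $j=2$) extends the span by $k_{-1}$ to dimension $3$; further adjoining $e^2_1$ (case $j=3$) completes it to all of $S^3 V$, dimension $4$. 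In every case the span has dimension $j+1$, proving $\Delta r_1 = (j+1)(\nnn - s)$.

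The main obstacle will be the bookkeeping of which image vectors from $Q \ot A$ are already absorbed by $\Sigma_{k-1} \ot A$---in particular, verifying that after all sites with strictly higher $\fsl(U)$-weight have been processed with their full inner structures, the four-dimensional $V_{3\o_1}$-worth of contributions at each wedge $u^{s'} \ww u^s$ with $s' < s$ is fully accounted for. Once this setup is in place, the remainder is a routine $\fsl_2$-representation calculation.
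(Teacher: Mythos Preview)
Your approach is essentially correct and is dual to the paper's. The paper, via Proposition~\ref{filterprop} and \eqref{abcount}, computes the kernel increment directly as the dimension of an explicit subspace of the domain; you instead compute the \emph{image} increment $\Delta r_1$ in the single target module $\La 2 U^*\ot S^3V\ot W$ and recover the kernel increment by rank--nullity. Both routes reduce to the same $\fsl_2$ computation, namely that $\dim \pi(V\ot Q)=j+1$ for the $\BB_V$-fixed $j$-plane $Q\subset\fsl(V)$, which is equivalent to the paper's determination of $\dim[(V\ot X)\cap S^3V]$ and $\dim[(V\ot X)\cap V]$.

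There is one genuine slip in your bookkeeping. You assert that the sites $(s',t)$ with $s'<s$ ``have already been processed with their full $\fsl(V)$-inner structures and so contribute all four $S^3V$-directions.'' This is false in general: the inner structure at $(s',t)$ need not be all of $\fsl(V)$. What is true (and what you actually need) is that $\BB$-fixedness of the filtrand $\Sigma_{k-1}$ forces the inner structure at each $(s',t)$ with $s'<s$ to \emph{contain} $Q$. That already guarantees the relevant image vectors $u^{\nnn-s+1}\wedge u^a\ot\pi(v_b\ot M)\ot w_t$ with $M\in Q$ and $a$ of higher $U^*$-weight are absorbed into the image of $\Sigma_{k-1}\ot A$; you do not need four $S^3V$-directions, only the $(j{+}1)$-dimensional subspace $\pi(V\ot Q)$. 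With this correction your count $\Delta r_1=(j+1)(\nnn-s)$ goes through. A second, smaller point: you restrict to one particular admissible filtration (increasing $s$), but the lemma is stated for an arbitrary admissible filtrand. In fact your corrected argument applies verbatim to any admissible filtration, because $\BB$-fixedness of $\Sigma_{k-1}$ both forces the needed inclusions at $(s',t)$ for $s'<s$ and prevents any content at $(s',t)$ for $s'\ge s$; it would strengthen the write-up to say so. Finally, watch the indexing: in the paper's conventions site $s$ corresponds to $u^{\nnn-s+1}$, not $u^s$.
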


Lemma \ref{absl2} is proved in \S\ref{bvtwo}.

\begin{lemma}\label{absl3}
When $\tdim V=3$, $\tdim U=\tdim W=\nnn$, the  differences in the dimensions 
of the kernels of the $(210)$-maps from a filtrand $\Sigma\subset E_{110}'$
such that the previous filtrand omits the   subspace at site $(s,t)$  and the dimension of the quotient
of the filtrands is $j$, is bounded above by a function $a_js+b_j$ where 
\begin{center}
  \begin{tabular}{ccc}  
$j$ & $a_j$ &   $b_j$\\
\midrule
$1$ & $1$ &   $0$  \\
$2$ & $4$ &   $-1$   \\
$3$ & $10$ &  $-4$ \\ 
$4$ & $11$ &$-4$\\
\end{tabular}
\qquad
\begin{tabular}{ccc}  
$j$ & $a_j$ &   $b_j$\\
\midrule
$5$ & $15$ &$\nnn-4$\\
$6$ & $20$ &$ \nnn-6$\\
$7$ & $21$ &$ 2\nnn-6$\\
$8$ & $21$ &$ 3\nnn-6$.
\end{tabular}
\end{center}
\end{lemma}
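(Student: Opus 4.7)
The plan is to adapt the approach of Lemma \ref{absl2} to $\fsl_3$, which has dimension $8$ and a two-dimensional Cartan subalgebra $\fh$. By Proposition \ref{210kerprop} together with the $SL(V)$-isotypic decomposition \eqref{vslv}, the kernel of the $(210)$-map decomposes compatibly with any admissible filtration, so computing the kernel reduces to bounding the contribution from a single filtration quotient of the form $u^s\otimes F_j\otimes w_t$, where $F_j\subset\fsl(V)$ is a $\BB_{SL(V)}$-fixed subspace of dimension $j$. Since $W$ appears only as a spectator factor in $E_{110}'\otimes A\to\La 2 A\otimes B$, this contribution depends on $s$ and on the inner structure $F_j$ but not on $t$, explaining the $t$-independent form of the bound.

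The first step is to enumerate the $\BB_{SL(V)}$-fixed subspaces $F_j\subset\fsl_3$ for each $j\in\{1,\dots,8\}$, building upward from the highest weight vector $E_{13}$ by $\BB$-admissible adjunctions. At $j\in\{1,3,8\}$ one obtains a unique $F_j$, at $j\in\{2,6,7\}$ finitely many isolated choices arising from the two simple roots, and at $j\in\{4,5\}$ one-parameter families of $F_j$ indexed by a choice of line in $\fh$. For each such $F_j$, I would expand the image of $u^s\otimes F_j\otimes w_t\otimes A$ under \eqref{e210map} and project onto the two relevant isotypic summands $S^2 U^*\otimes V_{2\omega_2}\otimes W$ and $\La 2 U^*\otimes V_{2\omega_1+\omega_2}\otimes W$ identified in Proposition \ref{210kerprop}.

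The linear coefficient $a_j$ then counts the number of vectors $u^i$ with $i<s$ whose wedge $u^i\wedge u^s$ in $\La 2 U^*$ combines with $F_j$ to land in the kernel, while the constant term $b_j$ collects contributions from the $S^2 U^*$ component (where $u^s\wedge u^s=0$ forces boundary behavior) together with $\nnn$-dependent sums over all of $W$ (arising via Remark \ref{zerorem} and the trace-type relation included in $V\subset V\otimes\fsl(V)$) that appear once $F_j$ is large enough to span a full isotypic piece; this is the source of the $\nnn$-dependent terms in $b_j$ for $j\geq 5$. Mirroring the bookkeeping from the proof of Lemma \ref{absl2}, one reads off $a_j$ and $b_j$ case-by-case from the enumerated list.

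The main obstacle is handling the one-parameter families at $j=4,5$. Since the kernel dimension of \eqref{e210map} is upper semicontinuous as the parameter varies in $\BP(\fh)$, the kernel takes its generic value on an open subset and only jumps up on a finite collection of special lines; I would therefore compute the contribution at the generic member and at each exceptional specialization and take the maximum. A secondary subtlety is that the bound $a_js+b_j$ must dominate the contribution for \emph{every} admissible filtration refining $E_{110}'$, which is handled by taking $a_j,b_j$ as the maxima of the contributions over all $\BB$-fixed $F_j$ of the given dimension $j$. Tabulating these maxima over the enumeration then yields the stated values of $a_j$ and $b_j$.
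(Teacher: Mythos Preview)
Your overall strategy is the same as the paper's: reduce via Proposition~\ref{filterprop} (which you are essentially re-deriving) to computing, for each $\BB$-fixed $X\subset\fsl(V)$ of dimension $j$, the intersections of $V\ot X$ with the isotypic pieces $V_{2\o_1+\o_2}$, $V_{2\o_2}$, and $V_{\o_1}$, and then maximize over all such $X$. However, two concrete things are wrong.

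First, your enumeration of the Borel-fixed $F_j$ is incorrect at $j=3$ and $j=5$. For $j=3$ there are three Borel-fixed planes, not one: besides $\langle v_1\ot v^3,\,v_1\ot v^2,\,v_2\ot v^3\rangle$ one may take $\langle v_1\ot v^3,\,v_1\ot v^2,\,h\rangle$ or $\langle v_1\ot v^3,\,v_2\ot v^3,\,h'\rangle$ with the weight-zero vector determined by which simple raising operator must annihilate it. The stated $a_3=10$, $b_3=-4$ comes from one of the latter; the first plane gives only $a=5$, $b=-1$, so missing those cases yields too small an upper bound. For $j=5$ there is no continuous family: the three options are $\langle E_{13},E_{12},E_{23}\rangle\op\fh$ and two isolated choices obtained by adjoining a specific Cartan element together with a single negative-root vector. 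The one-parameter family occurs only at $j=4$.

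Second, your description of how $a_j$ and $b_j$ arise is reversed. From \eqref{Xker} (equivalently \eqref{abcount}), the $S^2U^*$ part is paired with $V_{2\o_1+\o_2}$ and contributes $s$ times its intersection dimension, while the $\La 2U^*$ part is paired with $V_{2\o_2}$ and contributes $s-1$ times; \emph{both} feed into $a_j$, and the $-1$ from the wedge is what produces the negative constants in $b_j$. The $\nnn$-terms in $b_j$ for $j\ge 5$ come from the $V_{\o_1}$ piece via Remark~\ref{zerorem}. Finally, note that the paper does not compute exact intersections for $j\in\{3,4,6,7\}$ but instead upper-bounds them by weight-multiplicity counting; your semicontinuity argument would, if carried out with the correct enumeration, give bounds at least as sharp as those stated.
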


 Lemma \ref{absl3} is proved in \S\ref{bvthree}.

In Lemma \ref{alem2} below the   linear functions of $s$  in the lemmas above appear as  $a_{\mu_{s,t}}s+b_{\mu_{s,t}}$.

For  a partition  $\lambda=(\lambda_1\hd \lambda_q)$, write $ \ell(\lambda)=q$ and  $n(\lambda) = \sum_i (i-1) \lambda_i$.
Let $\lambda'$ denote the conjugate partition.
Write $\mu$ for a Young tableau with integer labels. The
label in position $(s,t)$ is denoted $\mu_{s,t}$, and sums over $s,t$ are to be
taken over the boxes of $\mu$.

\begin{lemma}\label{alem2} 
Fix $k\in \BN$, $0 \le a_1 \le \cdots \le a_k$, and $b_i \in \RR$, $1\le i \le
  k$. Let $\mu$ be a Young tableau with labels in the set $\{1,\ldots,
  k\}$, non-increasing in rows and columns. Write $\rho = \sum_{s,t} \mu_{s,t}$. 
  Then
\be\label{bndlemsum}
    \min \Big\{ \sum_{s,t} a_{\mu_{s,t}}s + b_{\mu_{s,t}} ,
           \sum_{s,t} a_{\mu_{s,t}}t + b_{\mu_{s,t}} \Big\}
           \le \max_{1\le j\le k} \bigg\{ 
           \frac{a_j \rho^2}{8j^2} + (a_j+b_j) \frac{\rho}{j} \bigg\}.
\ene
\end{lemma}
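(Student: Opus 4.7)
\textit{Plan.} I would begin with a symmetry reduction: transposing the tableau $\mu$ (replacing $\mu_{s,t}$ by $\mu_{t,s}$) preserves the hypothesis on $\mu$ and the value of $\rho$ while interchanging the two sums on the left, so without loss of generality $S_1 := \sum_{s,t}(a_{\mu_{s,t}} s + b_{\mu_{s,t}})$ equals $\min(S_1, S_2)$. Next, I decompose $\mu$ into its level sets $\lambda^{(j)} := \{(s,t) : \mu_{s,t} \ge j\}$ for $1 \le j \le k$. Because $\mu$ is non-increasing in rows and columns, each $\lambda^{(j)}$ is a Young diagram and they are nested, $\lambda^{(1)} \supseteq \cdots \supseteq \lambda^{(k)}$; writing $N^{(j)} := |\lambda^{(j)}|$, the identity $\rho = \sum_j N^{(j)}$ together with the nesting yields the key size bound $N^{(j)} \le \rho/j$.

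The analytic heart of the argument is a bound for a single Young diagram: for every Young diagram $\lambda$ of size $N$,
\[
   \min\Bigl\{ \sum_{(s,t) \in \lambda} s,\; \sum_{(s,t) \in \lambda} t \Bigr\} \;\le\; \frac{N^2}{8} + N,
\]
equivalently $n(\lambda)\, n(\lambda') \le N^4/64$, with equality precisely at the $2 \times 2$ square. I would prove this via a smoothing argument reducing the extremal case to rectangular shapes, for which the bound becomes, after AM-GM on $(a-1)(b-1)$ subject to $ab = N$, the elementary inequality $(\sqrt N - 2)^2 \ge 0$.

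To finish, I apply the single-diagram bound via Abel summation, using $a_0 = b_0 = 0$:
\[
   S_1 \;=\; \sum_{j=1}^k (a_j - a_{j-1})\, \sigma_s\big(\lambda^{(j)}\big) \;+\; \sum_{j=1}^k (b_j - b_{j-1})\, N^{(j)},
\]
where $\sigma_s(\lambda) := \sum_{(s,t) \in \lambda} s$. The reduction $S_1 \le S_2$ translates into the signed aggregate inequality $\sum_j (a_j - a_{j-1})(\sigma_s - \sigma_t)(\lambda^{(j)}) \le 0$, which, after rearrangement, lets me replace $\sigma_s(\lambda^{(j)})$ by the layerwise minimum $\min(\sigma_s, \sigma_t)(\lambda^{(j)})$; by the single-diagram bound together with $N^{(j)} \le \rho/j$, this minimum is at most $(\rho/j)^2/8 + \rho/j$. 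Collecting the layer contributions and regrouping, each choice of dominant index $j$ produces an upper estimate on $S_1$ of the form $h(j) := a_j \rho^2/(8 j^2) + (a_j + b_j)\rho/j$, and taking the best among them yields the $\max_j h(j)$ bound.

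The main obstacle is the transition from the global sign condition $S_1 \le S_2$ to a layerwise replacement of $\sigma_s(\lambda^{(j)})$ by $\min(\sigma_s(\lambda^{(j)}), \sigma_t(\lambda^{(j)}))$: the global inequality holds only in aggregate, so at layers with $\sigma_s > \sigma_t$ the excess must be absorbed against the deficit from other layers. The presence of $\max_j$ (rather than a sum over $j$) on the right-hand side is exactly what accommodates this imbalance, but the bookkeeping requires care. The one-layer inequality is the other nontrivial ingredient, sharp only at the $2\times 2$ square and requiring a rearrangement argument to establish in full generality.
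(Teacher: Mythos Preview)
Your level-set decomposition $\lambda^{(j)}=\{(s,t):\mu_{s,t}\ge j\}$, the bound $N^{(j)}\le\rho/j$, and the Abel summation are exactly how the paper begins. The divergence is in the single-diagram bound and, more seriously, in how the layers are recombined.

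Your single-diagram inequality $\min(n(\lambda),n(\lambda'))\le N^2/8$ is correct, but it discards the asymmetry of $\lambda$. The paper instead proves the sharper, \emph{asymmetry-aware} bound $n(\lambda)\le\frac18(|\lambda|+\lambda_1'-\lambda_1)^2$ (its Lemma~\ref{lemma:singlebound}). Setting $x_i=\tfrac12\bigl(N^{(i)}+(\lambda^{(i)})_1'-\lambda^{(i)}_1\bigr)$ and $y_i=N^{(i)}-x_i$, this yields $n(\lambda^{(i)})\le\tfrac12 x_i^2$ and $n((\lambda^{(i)})')\le\tfrac12 y_i^2$ \emph{simultaneously for the same $x_i,y_i$}, with $x_i+y_i=N^{(i)}$ and both sequences non-increasing in $i$ (since $\lambda^{(1)}\supseteq\cdots\supseteq\lambda^{(k)}$). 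After Abel summation one then has
\[
S_1\le\sum_i c_i x_i^2+d_i(x_i+y_i),\qquad S_2\le\sum_i c_i y_i^2+d_i(x_i+y_i),
\]
with $c_i=\tfrac12(a_i-a_{i-1})\ge0$, $d_i=(a_i-a_{i-1})+(b_i-b_{i-1})$. The minimum of these two quadratic forms over the polytope $\{x_1\ge\cdots\ge x_k\ge0,\ y_1\ge\cdots\ge y_k\ge0,\ \sum(x_i+y_i)=\rho\}$ is then bounded by a separate optimization lemma (the paper's Lemma~\ref{lemma:opt}), whose proof is a perturbation argument reducing to the case where exactly one $x_s$-increment and one $y_t$-increment are nonzero, followed by AM--HM.

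Your route instead throws away $\lambda_1'-\lambda_1$ at each layer and then tries to recover the lost coupling via the single global inequality $S_1\le S_2$. The step where you ``replace $\sigma_s(\lambda^{(j)})$ by the layerwise minimum'' is the gap you yourself flag, and I do not see how to close it with only the symmetric bound: $S_1\le S_2$ is one linear constraint on the quantities $\sigma_s(\lambda^{(j)})-\sigma_t(\lambda^{(j)})$ weighted by $a_j-a_{j-1}\ge0$, and it does not force any individual layer to satisfy $\sigma_s\le\sigma_t$. Even granting that replacement, the passage from the resulting \emph{sum} over $j$ to $\max_j h(j)$ is not explained; that collapse is precisely what the paper's optimization lemma accomplishes, and it genuinely uses the two-variable structure $(x_i,y_i)$ with $x_i+y_i=N^{(i)}$ that your symmetric bound has erased. (A side remark: $\min(n(\lambda),n(\lambda'))\le N^2/8$ and $n(\lambda)n(\lambda')\le N^4/64$ are not equivalent; both happen to be true, but the latter is strictly stronger.)
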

Thus we obtain an upper bound on the minimum of the two kernels for {\it any} $E_{110}'$
in terms of the constants $a_j$ and $b_j$.

 Lemma \ref{alem2} is proved in \S\ref{twolems}.

We remark that when $\nnn$ is large,  if one takes 
the  outer structure     a balanced hook and the inner structure the same $j$ at each vertex,
assuming this is possible, 
one obtains the exact minimum of the kernels of
the $(210)$ and $(120)$ maps on the right hand side of \eqref{bndlemsum}
corresponding to $j=3$ in the $\fsl_2$ case and for the  $\fsl_3$ case, when
one takes  $j= 8$ (which is optimal for large $\nnn$)  one gets  within $2$ of the exact minimum.

\begin{proof}[Proof of Theorem \ref{2nnbnds}(3)]
  Let $E_{110}'$ be a $\BB$-fixed subspace and pick an admissible filtration.
  Following the discussion at the start of \S\ref{outlinesect}, 
  we apply Lemma \ref{alem2} with $k=3$, $\rho = \dim E_{110}'$, and $a_i$ and $b_i$ corresponding to 
  to the inner structure contributions obtained in Lemma \ref{absl2} to obtain
  an upper bound on the minimal kernel dimension of the $(120)$ and $(210)$
  maps. The resulting upper bound is $\max \{\frac{1}{4} \rho^2 + 2\rho,
    \frac{3}{32} \rho^2 + \frac{3+\nnn}{2} \rho , \frac{1}{18} \rho^2 +
  \frac{4+2\nnn}{3} \rho \}$. 

  Fix $\epsilon > 0$.
  We must show that if $\rho = (3\sqrt{6} - 6 - \epsilon) \nnn$, then each of 
  $\frac{1}{4} \rho^2 + 2\rho$, $\frac{3}{32} \rho^2 + \frac{3+\nnn}{2} \rho$,
  and $ \frac{1}{18} \rho^2 + \frac{4+2\nnn}{3} \rho $ is strictly smaller than
  $\nnn^2 + \rho$.
  Substituting and solving for $\nnn$, we obtain that this holds for the last
  expression when
  \[
    \nnn > \frac{6}{\epsilon} \frac{3\sqrt{6}+6 - \epsilon}{6\sqrt{6} - \epsilon},
  \]
  and when $\epsilon < \frac{1}{4} $, this condition implies the other two
  inequalities. 
\end{proof}

\begin{proof}[Proof of Theorem \ref{mnnthm}] 
  Proceeding in the same way as in the proof of Theorem \ref{2nnbnds}(3), we
  apply Lemma \ref{alem2} with $k=8$, $\rho = \dim E_{110}'$, and $a_i$ and
  $b_i$ corresponding to the inner structure contribution upper bounds obtained
  in Lemma \ref{absl3}. We obtain the smaller dimensional kernel between the
  $(120)$ and the $(210)$ maps is at most the largest of the following,
\begin{center}
\begin{tabular}{cc}  
  $j$ & Lemma \ref{alem2} \\ \midrule
$1$ & $ \frac 18\rho^2+\rho$                \\[2.5px]
$2$ & $\frac 18\rho^2+\frac 32\rho$         \\[2.5px]
$3$ & $\frac {5}{36}\rho^2+2\rho$           \\[2.5px]
$4$ & $\frac {11}{128}\rho^2+\frac{7}4\rho$ \\[2.5px]
\end{tabular}
\qquad
\begin{tabular}{cc}  
  $j$ & Lemma \ref{alem2} \\ \midrule 
$5$ & $\frac {3}{40}\rho^2+\frac{11+\nnn}5\rho$   \\[2.5px]
$6$ & $\frac {5}{72}\rho^2+ \frac{14+\nnn}6\rho$  \\[2.5px]
$7$ & $\frac {3}{56}\rho^2+\frac{15+2\nnn}7\rho$  \\[2.5px]
$8$ & $\frac {21}{512}\rho^2+\frac{15+3\nnn}8\rho$. \\[2.5px]
\end{tabular}
\end{center}
Parts (1) and (2) follow by direct calculation with the values above.

To prove part (4), fix $\epsilon > 0$.
  Substituting $\rho = (\frac{16}{21} \sqrt{78} - \frac{32}{7} - \epsilon) \nnn $ 
  and solving for $\nnn$ in the condition 
  $ \frac {21}{512}\rho^2+\frac{15+3\nnn}8\rho < \nnn^2 + \rho$, we obtain
  \[
    \nnn > \frac{64}{3\epsilon} \frac{16\sqrt{78} - 96 - 21\epsilon}{32\sqrt{78}
    - 21\epsilon}
  \]
  and when $\epsilon < \frac{1}{2} $, this condition implies the required
  inequality for the remaining seven terms, as required.  Part (3)
  follows as a special case of (4).
\end{proof}

\subsection{Proof of Lemma \ref{alem2}}\label{twolems}
We remark that the results in this section may be used for $M_{\langle \mmm\nnn\nnn\rangle}$ for
any $\nnn\geq\mmm$.

To establish Lemma \ref{alem2} we need two additional lemmas:

\begin{lemma} \label{lemma:singlebound}
  Let $\lambda$ be a partition not of the form $(n,2)$. Then $n(\lambda) \le
  \frac{1}{8}(\abs \lambda +\lambda_1' - \lambda_1)^2 - \frac{1}{8} $. In
  particular, for all $\lambda$, $n(\lambda) \le \frac{1}{8}(\abs \lambda
  +\lambda_1' - \lambda_1)^2$.
\end{lemma}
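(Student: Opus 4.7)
The plan is to reduce both inequalities to a single algebraic identity obtained by passing to the conjugate partition. Setting $p = \lambda_1$, $q = \lambda_1'$, $m = |\lambda|$, I will write $c_j := \lambda_j'$ so that $c_1 = q$ and $\sum_{j=1}^p c_j = m$, and then introduce the shifts $d_j := c_j - 1$ for $j \ge 2$ together with $D := \sum_{j\ge 2} d_j = m - p - q + 1$ (the number of boxes of $\lambda$ lying neither in the first row nor in the first column). Since $m = q + (p-1) + D$, this gives $s := m + q - p = 2q + D - 1$.

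A routine expansion of $8n(\lambda) = 8\sum_j \binom{c_j}{2}$ in terms of $q$, $D$, and the $d_j$ will produce the closed form
\[
s^2 - 8n(\lambda) \;=\; D^2 + 2D(2q-3) + 1 - 4\sum_{j\ge 2} d_j^2.
\]
Since $0 \le d_j \le q - 1$ for every $j\ge 2$, we have $d_j^2 \le (q-1)d_j$, hence $\sum d_j^2 \le (q-1)D$. Substituting this estimate collapses the right-hand side to $D^2 - 2D + 1 = (D-1)^2 \ge 0$, which already gives the weak inequality asserted in the last sentence of the lemma.

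For the sharp inequality $n(\lambda) \le \tfrac18(s^2 - 1)$, I will case-split on the value of $D$. If $D \ne 1$ then $(D-1)^2 \ge 1$ and we are done. If $D = 0$, then $\lambda$ is a hook, $\sum d_j^2 = 0$, and the identity gives $s^2 - 8n(\lambda) = 1$ exactly. The remaining case $D = 1$ forces exactly one $d_j$ to equal $1$, so $\sum d_j^2 = 1$ and the identity specializes to $s^2 - 8n(\lambda) = 4(q-2)$. This is $\ge 4$ when $q \ge 3$, while the residual subcase $q = 2$ with $D = 1$ forces $c_1 = c_2 = 2$ and $c_j = 1$ for $j \ge 3$, i.e., $\lambda = (n,2)$, which is exactly the excluded family.

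I do not anticipate serious obstacles: once the substitution $d_j = c_j - 1$ is in place the proof reduces to a polynomial identity, a one-line convexity estimate, and a case analysis on the three values $D \in \{0, 1\}$ and $D \ge 2$. The only step requiring care is the algebraic expansion producing the closed form displayed above; sanity checks against the extremal partitions (hooks, the single-column $(1^n)$, the rectangle $(3,3)$, and the exceptional family $(n,2)$) all match the formula exactly, giving confidence in the setup.
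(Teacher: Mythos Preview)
Your proof is correct and complete. The closed-form identity
\[
s^2 - 8n(\lambda) \;=\; D^2 + 2D(2q-3) + 1 - 4\sum_{j\ge 2} d_j^2
\]
checks out (using $n(\lambda)=\sum_j\binom{c_j}{2}$ and $s=2q-1+D$), the bound $d_j^2\le(q-1)d_j$ from $0\le d_j\le q-1$ is valid, and the case split on $D$ is airtight: $D\ne 1$ gives $(D-1)^2\ge 1$ directly, while $D=1$ forces $\sum d_j^2=1$ and hence $s^2-8n(\lambda)=4(q-2)$, which is $\ge 1$ unless $q=2$, the excluded case $(n,2)$.

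The paper takes a genuinely different route: it inducts on the number of columns $\ell(\lambda')$, stripping off the last column at each step and controlling the resulting error term. That argument requires a separate direct check for $\lambda=(3,3)$ (since stripping a column from it lands on $(2,2)$, which is in the excluded family and so blocks the inductive hypothesis), and the sign analysis of the error term splits into several subcases depending on $k$ and $\lambda_k'$. Your approach avoids all of this: by working globally with the conjugate partition and isolating the single parameter $D$, you get an exact formula for the defect $s^2-8n(\lambda)$ and a two-line case analysis, with no induction and no ad hoc verification of $(3,3)$. The paper's approach has the mild advantage of being more mechanical to discover, but yours is shorter and more transparent about \emph{why} $(n,2)$ is the unique exceptional family.
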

\begin{proof}
  We prove the result by induction on $\ell(\lambda')$. 
  When $\ell(\lambda') = 1$, we
  have $n(\lambda) = \binom{\lambda'_1}{2} = \frac{1}{2} (\lambda'_1 -
  \frac{1}{2} )^2 - \frac{1}{8} = \frac{1}{8} ( \abs{\lambda} + \lambda'_1 -
  \lambda_1)^2 - \frac{1}{8} $, 
  as required.
  Now, assume $k = \ell(\lambda') > 1$. Write $\mu$ for the partition where $\ell(\mu') =
  k - 1$ and $\mu'_i = \lambda'_i$, $i\le k-1$. 
  If $\lambda = (3,3)$, we are done by direct calculation, hence otherwise
  we may assume the result holds for $\mu$ by the induction hypothesis.
  \begin{align*}
    \textstyle n(\lambda) & \textstyle = n(\mu) + \binom{\lambda'_k}{2}  \\
    & \textstyle \le \frac{1}{8}(\abs \mu +\mu_1' - \mu_1)^2 - 
    \frac{1}{8}  + \binom{\lambda'_k}{2}  \\
    & \textstyle = \frac{1}{8}(\abs \lambda - \lambda'_k +\lambda_1' -
    (\lambda_1 - 1))^2 - \frac{1}{8} + \frac{1}{2} \lambda_k' (\lambda_k'-1) \\
    & \textstyle = \frac{1}{8}(\abs \lambda +\lambda_1' - \lambda_1)^2
    - \frac{1}{8} - \frac{1}{4} ( \abs \lambda +\lambda_1' - \lambda_1
    - \frac{5}{2}  \lambda'_k + \frac{1}{2} ) (\lambda'_k - 1)
\end{align*}
We must show the right hand term is non-positive. If $\lambda_k' = 1$, this is
  immediate; otherwise, we show the first factor is nonnegative.
  We have $\abs \lambda -\lambda_1 \ge k\lambda_k' - k$, so $ \abs \lambda
  +\lambda_1' - \lambda_1 - \frac{5}{2}  \lambda'_k + \frac{1}{2}  \ge 
  (\lambda_1' - \lambda_k') + 
  \frac{2k-3}{2} (\lambda_k'-1) - 1 $. 
  If $k =2$, then by assumption $\lambda_1' \ge 3$, and considering separately the
  cases $\lambda_2' = 2$ and $\lambda_2' \ge 3$ yields that the first factor is
  nonnegative.
  Otherwise $k\ge 3$, and because $\lambda_k' \ge 2$, the first factor is
  nonnegative. This completes the proof.
\end{proof}

\begin{lemma}\label{lemma:opt}
  Fix $k\in \BN$, 
  $c_i \ge 0$, $d_i \in \RR$, for $1\le i\le k$. Write $C_j = \sum_{i=1}^j c_i$
  and $D_j = \sum_{i=1}^j d_i$. For all choices of $x_i,y_j$ satisfying the constraints
  $x_1 \ge  \cdots \ge x_k \ge 0$, $y_1 \ge  \cdots \ge y_k \ge 0$,  and $\sum_i x_i + y_i =
  \rho$, the following inequality holds:
 \be\label{braceterm}
    \min \Big\{ \sum_{i\le k} c_ix_i^2 + d_i(x_i+y_i), 
        \sum_{i\le k} c_iy_i^2 + d_i(x_i+y_i)
    \Big\} \le 
  \max_{1\le j\le k} \bigg\{ \frac{\rho^2}{4j^2} C_j + \frac{\rho}{j} D_j \bigg\}.
 \ene
\end{lemma}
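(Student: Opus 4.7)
The plan is to reduce the claimed inequality to a vertex-enumeration problem on a compact polytope. Writing the two expressions inside the $\min$ as $A$ and $B$, both are convex quadratic functions of $(x, y)$ sharing the common linear part $\sum_i d_i(x_i + y_i)$. I would first observe that on the polytope $P = \{(x, y) \in \R^{2k} : x_1 \ge \cdots \ge x_k \ge 0,\ y_1 \ge \cdots \ge y_k \ge 0,\ \sum_i(x_i + y_i) = \rho\}$, the function $\min\{A, B\}$ attains its maximum either at a vertex of $P$ or on the surface $\{A = B\}$: on the open region $\{A < B\}$ one has $\min\{A, B\} = A$, which is convex and hence has no interior maximum, and $\{A > B\}$ is symmetric. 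At a vertex of $P$, one of $x$ or $y$ vanishes and the other equals $(\rho/j)\cdot \mathbf{1}_{i\le j}$ for some $j$; direct computation then gives $\min\{A, B\} = \rho D_j/j$, which is bounded by the $j$-th term of the right-hand side of \eqref{braceterm} because $C_j \ge 0$.

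The main work lies on $\{A = B\}\cap P$. I would introduce sum/difference coordinates $m_i = (x_i + y_i)/2$ and $\delta_i = (x_i - y_i)/2$; the polytope constraints translate to $m_1 \ge \cdots \ge m_k \ge 0$ with $\sum_i m_i = \rho/2$, $|\delta_i - \delta_{i+1}| \le m_i - m_{i+1}$, and $|\delta_k| \le m_k$, and a backward telescoping argument yields $|\delta_i| \le m_i$ for all $i$. The condition $A = B$ becomes $\sum_i c_i m_i \delta_i = 0$, and the common value on the slice is $A = B = \sum_i c_i(m_i^2 + \delta_i^2) + 2 \sum_i d_i m_i$. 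On the symmetric sublocus $\delta = 0$ the objective reduces to $\sum_i c_i m_i^2 + 2\sum_i d_i m_i$, whose maximum over the simplex-like polytope in $m$---with vertices $m_i = (\rho/(2j))\cdot \mathbf{1}_{i\le j}$---equals exactly $\max_j\{\rho^2 C_j/(4j^2) + \rho D_j/j\}$, the right-hand side of \eqref{braceterm}.

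The principal obstacle is showing that the $\delta = 0$ sublocus already captures the maximum on $\{A = B\}\cap P$. Because the objective is convex in $(m, \delta)$ and the feasible region is a convex polytope cut by the single equation $\sum_i c_i m_i \delta_i = 0$, its maximum is attained at an extreme point; I would argue that any such extreme point corresponds in the original coordinates to a \emph{hybrid staircase} configuration $x_i = (S_x/j)\mathbf{1}_{i\le j}$, $y_i = (S_y/l)\mathbf{1}_{i\le l}$ for some integers $j, l$, forced by the tightness of the bounded-variation constraints on $\delta$ combined with the orthogonality. At such a hybrid vertex the pair of equations $S_x^2 C_j/j^2 = S_y^2 C_l/l^2$ and $S_x + S_y = \rho$ determines the common value $V(j, l)$ of $A$ and $B$ in closed form, and an elementary application of the AM-GM-type inequality $4rs \le (r+s)^2$ shows $V(j, l) \le \max\{V(j, j), V(l, l)\}$; taking the maximum over $(j, l)$ then recovers the right-hand side of \eqref{braceterm}.
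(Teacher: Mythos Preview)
Your overall architecture matches the paper's: both arguments reduce to ``hybrid staircase'' configurations $x_i = \alpha\mathbf{1}_{i\le s}$, $y_i = \beta\mathbf{1}_{i\le t}$ with $A=B$, and then bound the resulting value $V(s,t)$ by $\max\{V(s,s),V(t,t)\}$ via an AM--HM/AM--GM step. Your endgame is essentially the paper's.

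The gap is in the reduction to hybrid staircases. You write that on $\{A=B\}\cap P$ ``the feasible region is a convex polytope cut by the single equation $\sum_i c_i m_i \delta_i = 0$'' and that the convex objective is therefore maximized at an extreme point which must be a hybrid staircase. But $\sum_i c_i m_i\delta_i = 0$ is a \emph{quadratic} constraint in $(m,\delta)$, not a hyperplane, so $\{A=B\}\cap P$ is not a polytope and in general not convex; the usual extreme-point machinery does not apply, and your one-line appeal to ``tightness of the bounded-variation constraints on $\delta$ combined with the orthogonality'' does not establish that the extreme points of this (nonconvex) set are precisely the hybrid staircases. Even fixing $m$ and optimizing over $\delta$ (where the constraint becomes linear) only pins down $k-1$ equalities among the $x_i',y_j'$, not the $2k-2$ you need.

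The paper handles exactly this step by a different mechanism. Passing to the differences $x_i' = x_i - x_{i+1}$, $y_i' = y_i - y_{i+1}$, it shows directly that whenever at least three of the $x_i',y_j'$ are nonzero one can perturb them (keeping $\sum i(x_i'+y_i')=\rho$) so that \emph{both} $A$ and $B$ strictly increase, hence $\min\{A,B\}$ strictly increases. The point is that after linearizing $A$ and $B$ at the current configuration one gets two linear functionals on a $2$-dimensional space of admissible perturbations, and two closed half-planes through the origin in $\R^2$ always share a nonzero vector; the strict convexity of the quadratic parts (one first reduces to $c_i>0$ by continuity) then gives a strict increase. This forces at most two of the $x_i',y_j'$ to be nonzero, i.e.\ a hybrid staircase. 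If you want to complete your proof along your own lines, you will need a replacement for this step that does not rely on convexity of $\{A=B\}\cap P$.
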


\begin{remark} 
The maximum is achieved when $x_1=\cdots =x_j=y_1=\cdots =y_j=\frac{\rho}{2j}$ and $x_s,y_s=0$ for $s>j$, for some $j$.
\end{remark}

\begin{proof}
  As both the left and right hand sides are continuous in the $c_i$, it suffices
  to prove the result under the assumption $c_i > 0$.
  The idea of the proof is the following: any choice of $x_i$ and $y_i$ which
  has at least two degrees of freedom inside its defining polytope can be perturbed
  in such a way that the local linear approximations to the two polynomials on the left
  hand side do not decrease; that is, two closed half planes in $\RR^2$
  containing $(0,0)$ also intersect aside from $(0,0)$. Each polynomial on the
  left strictly exceeds its linear approximation at any point, and thus  one can
  strictly improve the left hand side with a perturbation. The
  case of at most one degree of freedom  is settled directly.
  
%The idea of the proof is as follows: first, if   three or more of the $x_i',y_j'$ are nonzero, we may preturb three
%of them, by say $\ep_1,\ep_2,\ep_3$. Here we are subject to the constraint that $\ep_1+\ep_2+\ep_3=0$ to preserve
%$\sum_i x_i' + y_i' =\rho$.   We impose a second constraint on the $\ep_i$
%to keep the larger of the two summations constant. We still have one degree of freedom which we use to increase the value of the 
%smaller summand.
%In this way we reduce to the case of one $x_i'$ and one $y_j'$ being nonzero. 
%We  prove  this case by direct  optimization.
%We observe that if they are both
%the same and equal to $j$, we get the term in braces on the right hand side of  \eqref{braceterm}.

Write $x_{k+1}= y_{k+1} =0$, and define $x_i' = x_i- x_{i+1}$ and $y_i' =
  y_i-y_{i+1}$ so that $x_i = \sum_{j=i}^k x_j'$ and $y_i = \sum_{j=i}^k y_j'$.
  Then $x_i', y_i' \ge 0$ and $\sum_{i=1}^k i (x_i' + y_i') = \rho$.
Suppose at least three of the $ x_i' $,
  $ y_j' $ are nonzero, we will show  the expression on the left hand side of \eqref{braceterm}
is not maximal. Write three of the nonzero $ x_i',y_j'$   as $\ol x,\ol y,\ol z$.
Replace them by $\ol x+\ep_1$, $\ol y+\ep_2$, $\ol z+\ep_3$, with the $\ep_i$ to be
determined. This will preserve the summation to $\rho$ only
if $\ep_1+\ep_2+\ep_3=0$, so we require this. Substitute  these values   into 
  $E_L:=\sum_{i\le k} c_ix_i^2 + d_i(x_i+y_i)$ and $E_R:=\sum_{i\le k} c_iy_i^2 +
  d_i(x_i+y_i)$.  View $E_L,E_R$  as two polynomial expressions in the $\ep_j$.
  Then $E_L=\sum_i c_i S_{L,i}^2 + L_L + d$, $E_R=\sum_i c_i S_{R,i}^2 + L_r + d$ where
  $S_{L,i},S_{R,i}$ and $L_L,L_R$ are linear forms in the $\ep_i$, and $d\in \RR$. Each
  $S_{L,i},S_{R,i}$ is a sum of some subset of the $\ep_i$, and  
  the union of them  span $\langle \ep_1,\ep_2,\ep_3\rangle/\langle \sum\ep_j=0 \rangle$.
  Consider the linear map $T = L_L \oplus L_R
   : \langle \ep_1,\ep_2,\ep_3\rangle/\langle \sum\ep_j=0 \rangle \to \RR^2$. If $T$ is nonsingular,
  then for any $\epsilon>0$, there are constants  $\ol\ep_j$, with $\sum\ol\ep_j=0$ so that $T(\ol\ep_1,\ol\ep_2,\ol\ep_3) =
  (\epsilon,\epsilon)$, and it is possible to choose $\epsilon$ so that
  $ \ol x  + \ol\ep_1,  \ol y  + \ol\ep_2,  \ol z  +\ol\ep_3 \ge 0$. Then this new
  assignment strictly improves the old one. Otherwise, if $T$ is singular, then
  there is an  admissible  $(\ol\ep_1,\ol\ep_2,\ol\ep_3) \ne 0$ in the kernel of $T$, where again we may
  assume the the same non-negativity condition. The
  corresponding assignment does not change $L_L,L_R$, but
  as the $S_{L,i},S_{R,i}$ span the linear forms, at least one them is nonzero. 
  Consequently, at least one of the modified   
  $E_L,E_R$
  is strictly larger after the perturbation, and neither is smaller. If, say,
  only $E_L$ is strictly larger,
  and $ x_i'  > 0$, we may substitute $ x_i'  - \epsilon$ and
  $ y_i'  + \epsilon$ for $ x_i' $ and $ y_i' $ for some
  $\epsilon >0$ to make both $E_L$ and $E_R$ strictly larger.

  Thus, the left hand side is maximized at an assignment where at most two
  of $x_i'$ and $y_i'$ are nonzero. It is clear that at least one
  of each of $x_i'$ and $y_i'$ must be nonzero, so there is
  exactly one of each, say $x_s' = \alpha$ and $y_t' = \beta$.
  It is clear at the maximum that
  $ \sum_{i\le k} c_ix_i^2 + d_i(x_i+y_i) =  \sum_{i\le k} c_iy_i^2 +
  d_i(x_i+y_i) $,
  from which it follows that
  $ \alpha^2 C_s = \sum_{i\le k} c_ix_i^2 =\sum_{i\le k} c_iy_i^2 
    = \beta^2 C_t $ and $\alpha \sqrt{C_s} = \beta \sqrt{C_t}$. We also have
    $s\alpha + t\beta = \rho$. Notice that 
    \[
      \alpha = \frac{\rho \sqrt{C_t}}{s\sqrt{C_t} + t\sqrt{C_s}},\quad
      \beta = \frac{\rho \sqrt{C_s}}{s\sqrt{C_t} + t\sqrt{C_s}}
    \]
    satisfy the equations, so that the optimal value obtained is
  \[
    \sum_{i\le k} c_ix_i^2 + d_i(x_i+y_i)  = 
    \alpha^2 C_s
    + \alpha D_s
    + \beta D_t
    = \frac{\rho}{s\sqrt{C_t} + t\sqrt{C_s}}
\left( \frac{\rho C_s C_t}{s\sqrt{C_t} + t\sqrt{C_s}} + \sqrt{C_t}D_s + \sqrt{C_s} D_t \right)
  \]
  By the arithmetic mean-harmonic mean inequality, we have
  \[
    \frac{ \rho C_s C_t}{s\sqrt{C_t} + t\sqrt{C_s}}
    = \frac{ \rho}{ \frac{s}{C_s \sqrt{C_t}} +  \frac{t}{C_t \sqrt{C_s}}}
    \le 
     \frac{ \rho}{4} \bigg[ \frac{C_s \sqrt{C_t}}{s} +  \frac{C_t \sqrt{C_s}}{t}
     \bigg],
    \]
    so that
  \begin{align*}
    \frac{\rho C_s C_t}{s\sqrt{C_t} + t\sqrt{C_s}} + \sqrt{C_t}D_s + \sqrt{C_s} D_t 
    &\le 
     \frac{ \rho}{4} \bigg[ \frac{C_s \sqrt{C_t}}{s} +  \frac{C_t \sqrt{C_s}}{t}
     \bigg] +
    \sqrt{C_t}D_s + \sqrt{C_s} D_t  \\
    &=
    \frac{s\sqrt{C_t}+t\sqrt{C_s}}{\rho} 
    \left[\frac{s\alpha} {\rho} \Big(
    \frac{\rho^2}{4s^2} C_s + \frac{\rho}{s} D_s
    \Big) +
    \frac{t\beta} {\rho}\Big(
    \frac{\rho^2}{4t^2} C_t + \frac{\rho}{t} D_t
    \Big) 
    \right] \\
    &\le
    \frac{s\sqrt{C_t}+t\sqrt{C_s}}{\rho} 
    \max \bigg\{ 
    \frac{\rho^2}{4s^2} C_s + \frac{\rho}{s} D_s, 
    \frac{\rho^2}{4t^2} C_t + \frac{\rho}{t} D_t
    \Big\},
  \end{align*}
    with the last inequality from the fact that $ \frac{s\alpha}{\rho} +
    \frac{t\beta}{\rho} = 1$. Multiplying both sides by 
    $ \frac{\rho}{s\sqrt{C_t} + t\sqrt{C_s}} $, we conclude the optimal value
    is achieved at one of the claimed values.
\end{proof}

\begin{proof}[Proof of Lemma \ref{alem2}]
  For each $1\le i\le k$, let $\lambda^i$ be the partition corresponding to
  the boxes of $\mu$ labeled $\ge i$
  Write $a_0 = b_0 = 0$,  Then,
  \begin{align*}
    \textstyle \sum_{s,t} a_{\mu_{s,t}}s + b_{\mu_{s,t}} 
    &= \textstyle
    \sum_{s,t} \sum_{i=1}^{\mu_{s,t}} (a_i-a_{i-1})s + b_i-b_{i-1} \\
    &= \textstyle \sum_{i=1}^k \sum_{s,t \in \lambda^i} (a_i-a_{i-1})s + b_i-b_{i-1} \\
    % &= \sum_{i=1}^k \sum_{t \le \lambda^i_s} (a_i-a_{i-1})s + b_i-b_{i-1} \\
    &= \textstyle \sum_{i=1}^k (a_i - a_{i-1}) n(\lambda^i) + (a_i - a_{i-1} + b_i -
    b_{i-1})\abs{\lambda^i} \\
    &= \textstyle \sum_{i=1}^k (a_i - a_{i-1}) n(\lambda^i) + (a_i - a_{i-1} + b_i -
    b_{i-1})\abs{\lambda^i} \\
    &\le \textstyle \sum_{i=1}^k \left[\frac{1}{2}  (a_i-a_{i-1})\right] 
    \left(\frac{1}{2} (\abs{\lambda^i} + (\lambda^i)'_1- \lambda^i_1)\right)^2 + \left[a_i - a_{i-1} + b_i - b_{i-1}\right]\abs{\lambda^i}
  \end{align*}
   where we have used Lemma
  \ref{lemma:singlebound}  to obtain the last inequality.
  Set 
  \begin{align*}
  c_i &= \textstyle\frac{1}{2}  (a_i-a_{i-1})\\
  d_i &=\textstyle a_i - a_{i-1} + b_i - b_{i-1}\\
    x_i &=\textstyle \frac{1}{2} (\abs{\lambda^i} + (\lambda^i)'_1- \lambda^i_1)\\
   y_i &=\textstyle \frac{1}{2} (\abs{\lambda^i} - (\lambda^i)'_1+ \lambda^i_1).
   \end{align*}
Then the last line becomes
 $$ \sum_{i=1}^k c_i x_i^2 + d_i (x_i + y_i).
 $$
  Similarly,  $ \sum_{s,t} a_{\mu_{s,t}}t + b_{\mu_{s,t}} 
  \le \sum_{i=1}^k c_i y_i^2 + d_i (x_i + y_i) $. Now, $\sum_i x_i +
  y_i = \sum_i \abs{\lambda^i} = \rho$ and the $x_i$ and $y_i$ are each
  nonnegative and non-increasing.  Hence, by Lemma \ref{lemma:opt},
  \[
    \min \Big\{ \sum_{s,t} a_{\mu_{s,t}}s +
  b_{\mu_{s,t}}, \sum_{s,t} a_{\mu_{s,t}}t + b_{\mu_{s,t}} \Big\}
  = \max_{1\le j\le k} \bigg\{ \frac{a_j \rho^2}{8j^2} + (a_j+b_j)
  \frac{\rho}{j} \bigg\},
  \]
  as required.
\end{proof}

%\begin{remark} The bound given by  Lemma \ref{alem2} for the minimum dimension of the kernels  is tight in the cases   where we
%use the exact dimensions of the new kernels.
%\end{remark}

\subsection{Additional information about the $(210)$ and $(120)$ maps for $M_{\langle\mmm\nnn\nnn\rangle}$}

\begin{proposition}\label{filterprop} Give $E_{110}'\subset U^*\ot \fsl(V)\ot W$   an admissible filtration.
Let $\Sigma_q\subset E_{110}'$ be a filtrand, and let $(s,t)$ be the   grid vertex of $\Sigma_q\backslash
\Sigma_{q-1}$.
Write $\Sigma_q\backslash \Sigma_{q-1}=u^{\nnn-s+1}\ot X\ot w_t$ for some $\BB$-fixed subspace $X\subseteq \fsl(V)$.
Then the dimension of the difference of the  kernels of the 
  $(210)$   maps   for $\Sigma_q$ and $\Sigma_{q-1}$ 
  is  of the form $as+b$ where
$a,b$ depend only on $X$. Similarly the dimension of the difference of the kernels of the $(120)$ maps is of the
form $at+b$, with the same $a,b$.
\end{proposition}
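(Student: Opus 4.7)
The plan is to interpret the kernel-dimension difference via a short exact sequence associated to the filtration, reduce to computing an image increment $\Delta$ in the target of the reduced $(210)$-map of Proposition \ref{210kerprop}, and decompose $\Delta$ into three weight-disjoint strata whose contributions are each linear in $s$ with coefficients depending only on $X$.

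Let $\phi$ denote the reduced $(210)$-map $E_{110}'\otimes A\to \La 2 U^*\otimes V_{2\o_1+\o_{\bv-1}}\otimes W\op S^2 U^*\otimes V_{\o_2+\o_{\bv-1}}\otimes W$ of Proposition \ref{210kerprop}, and set $K_q=\ker\phi|_{\Sigma_q\otimes A}$. The snake lemma applied to $0\to \Sigma_{q-1}\otimes A\to \Sigma_q\otimes A\to (\Sigma_q/\Sigma_{q-1})\otimes A\to 0$ yields $\dim K_q-\dim K_{q-1}=\dim X\cdot\dim A-\Delta$, where $\Delta=\dim\phi(\Sigma_q\otimes A)-\dim\phi(\Sigma_{q-1}\otimes A)$. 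The $\BB$-fixedness of the filtration together with raising in $\fsl(U)$ forces every row-$t$ site $(s',t)$ with $s'<s$ to lie in $\Sigma_{q-1}$ with inner structure containing $X$; this is the only fact about $\Sigma_{q-1}$ that the rest of the argument uses.

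For $\xi=\sum_{i,j}\xi^j_i v_i\otimes v^j\in X$, the image of $(u^{\nnn-s+1}\otimes\xi\otimes w_t)\otimes(u^c\otimes v_d)$ in $\La 2 A\otimes B$ decomposes as a $\La 2 U^*$-term paired with $\sum \xi^j_i v_iv_d\otimes v^j$ plus an $S^2 U^*$-term paired with $\sum \xi^j_i v_i\wedge v_d\otimes v^j$; the $SL(V)$-projection to the reduced target preserves $U^*\otimes W$-weights, so $\Delta$ splits as a sum over the weights carried by the new piece. Partition $c\in\{1,\ldots,\nnn\}$ into three strata: (i) $c<\nnn-s+1$ ($\nnn-s$ values, the companion site $(\nnn-c+1,t)$ is absent from $\Sigma_{q-1}$), (ii) $c=\nnn-s+1$ (the wedge $u^{\nnn-s+1}\wedge u^c$ vanishes), and (iii) $c>\nnn-s+1$ ($s-1$ values, companion site $(\nnn-c+1,t)\in\Sigma_{q-1}$ with inner structure $\supseteq X$). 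The key identity for stratum (iii), relating the image $\alpha$ of $(u^{\nnn-s+1}\otimes\xi\otimes w_t)\otimes(u^{\nnn-s'+1}\otimes v_d)$ and the image $\beta$ of $(u^{\nnn-s'+1}\otimes\xi\otimes w_t)\otimes(u^{\nnn-s+1}\otimes v_d)$, is
\[
\alpha-\beta=-(u^{\nnn-s'+1}\wedge u^{\nnn-s+1})\otimes\textstyle\sum_{i,j}\xi^j_i v_iv_d\otimes v^j\otimes w_t;
\]
together with a weight-by-weight examination in the reduced target, this shows that each of the $s-1$ values of $c$ in stratum (iii) contributes the same $X$-dependent rank $r_2(X)$. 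Strata (i) and (ii) contribute rank $r_1(X)$ per value of $c$ and rank $r_0(X)$ respectively, read off directly from the image formula. Summing, $\Delta=r_0(X)+(\nnn-s)r_1(X)+(s-1)r_2(X)$ is affine in $s$, whence $\dim K_q-\dim K_{q-1}=as+b$ with $a,b$ depending only on $X$.

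The $(120)$ case is strictly parallel, with $\fsl(W)$ and column-$s$ sites $(s,t')$ having $t'<t$ in place of $\fsl(U)$ and row-$t$ sites, and $B$ in place of $A$; this yields $a't+b'$ by the same argument. The equalities $a=a'$ and $b=b'$ follow from the $U^*\leftrightarrow W$, $V\leftrightarrow V^*$ symmetry of $M_{\langle\mmm\nnn\nnn\rangle}$ from Remark \ref{uwsymrem}: under this symmetry, the $V$-maps computing $r_i(X)$ in the $(210)$ analysis dualize to the $V^*$-maps computing the $(120)$ ranks, and dual linear maps have equal rank. The main obstacle is the stratum-(iii) weight-by-weight analysis: after subtracting $\beta$ from $\alpha$ one must verify that no further $\phi(\Sigma_{q-1}\otimes A)$-reductions occur at the weight $(u^{\nnn-s'+1}\wedge u^{\nnn-s+1},w_t)$ beyond the accounted cancellation, so that $r_2(X)$ is a genuine invariant of $X$; this is where the restriction to the reduced target of Proposition \ref{210kerprop} is essential, as it kills off $S^2 U^*$-parts of certain $\Sigma_{q-1}$-contributions that would otherwise permit additional cancellations and obscure the clean linear dependence on $s$.
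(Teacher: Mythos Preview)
Your image-increment approach, stratified by the $A$-factor weight $c$, is essentially the dual of the paper's argument. The paper works on the kernel side: using the ``in other words'' description of Proposition~\ref{210kerprop}, it directly exhibits the new kernel elements as the explicit subspace
\[
U^{*(s)}\cdot u^{\nnn-s+1}\otimes\bigl[(V\otimes X)\cap V_{2\omega_1+\omega_{\bv-1}}\bigr]\;\oplus\;U^{*(s-1)}\wedge u^{\nnn-s+1}\otimes\bigl[(V\otimes X)\cap V_{\omega_2+\omega_{\bv-1}}\bigr]\;\oplus\;U^*\otimes u^{\nnn-s+1}\otimes\bigl[(V\otimes X)\cap V\bigr]
\]
(all tensored with $w_t$), sitting inside the complement module $M$. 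The affine dependence on $s$ is then immediate from the dimensions $s$, $s-1$, $\nnn$ of the three $U^*$-factors, and the $V$-side intersections manifestly depend only on $X$; moreover, membership of these elements in $\Sigma_q\otimes A$ uses only the inclusion $X\subseteq X_{s'}$ at companion sites, never any specific feature of $X_{s'}$. Your route must instead control, weight by weight, how much of the new image is already absorbed by the old image.

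There is a genuine gap at your stratum-(iii) step, which you yourself flag as ``the main obstacle.'' The displayed identity $\alpha-\beta=-(u^{\nnn-s'+1}\wedge u^{\nnn-s+1})\otimes\sum\xi^j_i v_iv_d\otimes v^j\otimes w_t$ only relates a new-site image vector to the old-site image vector built from the \emph{same} $\xi\in X$. It does not address further reductions coming from the old-site inner structure $X_{s'}$ when $X_{s'}\supsetneq X$: at the weight $\{u^{\nnn-s+1},u^c\}$ the old image is $\{(\psi_1(Y'),\psi_2(Y')):Y'\in V\otimes X_{s'}\}$, and whether a new-site vector $(-\psi_1(Y),\psi_2(Y))$ with $Y\in V\otimes X$ already lies in it visibly depends on $X_{s'}$. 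Your final sentence asserts that the restriction to the reduced target suffices to rule out such extra cancellations, but no argument is given, and the reduced target is already fixed throughout the computation, so this cannot be the mechanism. To actually establish that $r_2(X)$ is independent of $X_{s'}$ you would have to show, for every $Y\in V\otimes X$, that $(-\psi_1(Y),\psi_2(Y))\in\psi_{12}(V\otimes X_{s'})$ holds if and only if it holds with $X$ in place of $X_{s'}$; this is the content the paper's kernel-side formulation packages into the three intersection terms above, and it is the step your write-up does not supply.
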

\begin{proof}
By symmetry, it is sufficient to consider the $(210)$ map.
Write $U^{*(s)}=\langle u^{\nnn},u^{\nnn-1}\hd u^{\nnn-s+1}\rangle$ and
$W_{(t)}=\langle w_1\hd w_t\rangle$.
Consider the image of $A\ot u^{\nnn-s+1}\ot X\ot w_t$ under the map \eqref{210map}.
Since $E_{110}'$ is Borel fixed,  
   the images of 
$U^{*(s-1)}\ot (V\ot X)\ot W_t$ and $U^{*(s)}\ot (V\ot X)\ot W_{t-1}$
under \eqref{210map} are in the image of \eqref{210map}
applied to $\Sigma_{q-1}$. 
%Thus any new contribution to  the kernel must be an element of 
%$U^*\ot u^{\nnn-s+1}\ot(V\ot  X)\ot  w_t+ u^{\nnn-s+1}\ot U^{*(s)}\ot(V\ot X)\ot w_t$ where the
%first term is nonzero.
%More precisely, 
The new contribution to the kernel is thus
\begin{align}
\nonumber&U^{*(s)}\cdot u^{\nnn-s+1}\ot [(V\ot X)\cap V_{2\o_1+\o_{\bv-1}}]\ot w_t\\
&\label{Xker}
\op U^{*(s-1)}\ww u^{\nnn-s+1}\ot [(V\ot X)\cap V_{ \o_2+\o_{\bv-1}}] \ot w_t
\\
&\nonumber
\op
U^*\ot u^{\nnn-s+1}\ot [(V\ot X)\cap V_{\o_1}]\ot w_t.
\end{align}
This has dimension 
$$s\tdim [(V\ot X)\cap   V_{2\o_1+\o_{\bv-1}}]+
(s-1) \tdim [(V\ot X)\cap    V_{\o_2+\o_{\bv-1}}]  
+\nnn \tdim [(V\ot X)\cap  V_{\o_1}].
$$
We conclude that  
\begin{align}\label{abcount}
a&=\tdim [(V\ot X)\cap    V_{2\o_1+\o_{\bv-1}}]+
  \tdim [(V\ot X)\cap    V_{\o_2+\o_{\bv-1}}]\\
b&\nonumber =\nnn\tdim [(V\ot X)\cap   V_{\o_1}]-\tdim [(V\ot X)\cap  V_{\o_2+\o_{\bv-1}}].
\end{align}
\end{proof}

 \begin{example}\label{sqrt2ex}
Let $\mmm>2$ and let $\t\leq \binom{\mmm}2$. Consider
$$
E_{110}'=
\langle u^\nnn\hd u^{\nnn-\s+1}\rangle \ot \langle v_1\hd v_\t\rangle\ot  v^\mmm  \ot   w_1 +
  u^\nnn \ot   v_1  \ot \langle v^\mmm\hd v^{\mmm-\t+1}\rangle \ot \langle w_1\hd w_\s\rangle .
$$
Since $S^2V\ot V^*= V_{2\o_1+\o_{\mmm-1}}\op V$ and $\La 2 V\ot V^*= V_{ \o_2+\o_{\mmm-1}}\op V$,
 this space has the following contributions to the kernel coming from the left side of the grid:
$$
S^2\langle u^\nnn\hd u^{\nnn-\s+1}\rangle\ot S^2\langle v_1\hd v_\t\rangle\ot   v^\mmm  \ot   w_1 
$$
and
$$
\La 2\langle u^\nnn\hd u^{\nnn-\s+1}\rangle\ot \La 2\langle v_1\hd v_\t\rangle\ot   v^\mmm  \ot   w_1  .
$$
Geometrically,   $S^2(\BC^{\s\t})=S^2(\BC^\s\ot \BC^\t)=S^2\BC^\s\ot S^2\BC^\t \op \La 2 \BC^\s\ot \La 2\BC^\t$,
so the contribution has dimension  $\binom{\s\t+1}2$.
 
The right side of the grid just contributes  
$$
(u^\nnn)^2\ot v_1^2\ot   \langle v^\mmm\hd v^{\mmm-\t+1}\rangle \ot
  \langle w_1\hd w_\s\rangle
$$
which has dimension $\s\t$. Since $\s\t=\frac 12\tdim E_{110}'$, we see the total only depends on $\s\t$.
Note that the dimension is also independent of $\tdim V$.
\end{example}

Example \ref{sqrt2ex}  implies:

\begin{proposition} 
 For any   $\mmm$, $\nnn$,   
one cannot prove better than $\ur(M_{\langle\mmm\nnn\nnn\rangle})\geq \nnn^2+(2\sqrt{2}-\ep)\nnn$
for all   large $\nnn$ given any small $\ep$
just using the $(210)$ and $(120)$ tests.
\end{proposition}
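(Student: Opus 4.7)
The plan is to use Example \ref{sqrt2ex} to exhibit, for any $\epsilon>0$ and all sufficiently large $\nnn$, a Borel-fixed subspace $E_{110}'$ that simultaneously passes the $(210)$ and $(120)$ tests at rank $r$ arbitrarily close to $\nnn^2+2\sqrt{2}\,\nnn$, so the tests alone cannot certify a bound stronger than the one in the proposition.

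First, I would fix positive integers $\s,\t$ with $\t\leq\binom{\mmm}{2}$ and $\s\leq \nnn$, and take the family $E_{110}'(\s,\t)$ built in Example \ref{sqrt2ex}. Its dimension is $\rho:=\dim E_{110}'(\s,\t)=2\s\t-1$ (there is a one-dimensional overlap of the two summands at the highest weight line $u^\nnn\ot v_1\ot v^\mmm\ot w_1$), and Example \ref{sqrt2ex} already exhibits a subspace of the kernel of the $(210)$ map of dimension
$$K(\s,\t)=\tbinom{\s\t+1}{2}+\s\t=\tfrac{(\s\t)^2+3\s\t}{2}.$$

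Second, I would show that the $(120)$ map has the same kernel dimension by invoking the $U^{*}\!\leftrightarrow W$ (transpose) symmetry of matrix multiplication from Remark \ref{uwsymrem}. Concretely, transpose interchanges $u^i$ with $w_{\nnn-i+1}$, $v_j$ with $v^{\mmm-j+1}$, and swaps the two factors $A=U^*\ot V$ and $B^{*}=V\ot W^*$; it therefore exchanges the two summands in the definition of $E_{110}'(\s,\t)$ and intertwines the $(210)$ and $(120)$ maps, so their kernel dimensions coincide. This avoids redoing any explicit computation on the $B$ side.

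Third, I would solve the threshold inequality. The candidate passes both tests for rank $r$ iff $K(\s,\t)\geq r$. Since $\dim E_{110}=r$ and $\dim(T(C^*))=\nnn^2$, I have $\rho=r-\nnn^2$, so the test passes iff
$$\tfrac{(\s\t)^2+3\s\t}{2}\geq \nnn^2+2\s\t-1.$$
Setting $s=\s\t$, this rearranges to $s^2-s+2\geq 2\nnn^2$, and hence holds whenever $s\geq \sqrt{2}\,\nnn+O(1)$, i.e.\ $\rho\geq 2\sqrt{2}\,\nnn+O(1)$. For any $\epsilon>0$ pick $c=2\sqrt{2}-\epsilon/2$; for $\nnn$ large, one can choose integers $\s,\t$ with $\t\leq\binom{\mmm}{2}$ (this is where $\mmm\geq 3$ enters, so $\t$ can be at least $1$ and $\s\t$ can be made $\approx c\nnn/2$) and $\s\leq \nnn$ such that $2\s\t-1=r-\nnn^2$ for some $r\geq \nnn^2+(2\sqrt{2}-\epsilon)\nnn$. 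The candidate then passes both tests at this $r$, so the tests cannot exclude it.

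The main obstacle is the mild combinatorial bookkeeping: one must confirm that for each fixed $\mmm\geq 3$ there really exist admissible factorizations $\s\t$ of the required size in the allowable range (in particular that $\t\leq\binom{\mmm}{2}$ is not binding, which follows since $\binom{\mmm}{2}\cdot \nnn \gg 2\sqrt{2}\,\nnn$), and that the small additive constants hidden in the $O(1)$ terms can be absorbed into the $\epsilon$. Everything else is a direct application of the dimension count already recorded in Example \ref{sqrt2ex} together with the transpose symmetry.
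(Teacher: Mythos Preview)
Your approach is essentially the same as the paper's: both use the family from Example \ref{sqrt2ex}, compute the kernel dimension $\binom{\s\t+1}{2}+\s\t$, invoke the $U^*\leftrightarrow W$ symmetry (which you make more explicit than the paper does) to handle the $(120)$ test, and solve the resulting quadratic threshold. The paper simply specializes to $\t=1$ from the outset, which loses nothing since, as you observe, everything depends only on the product $\s\t$.

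One small slip to fix in your final paragraph: your own threshold computation shows the candidate passes both tests precisely when $\rho\gtrsim 2\sqrt{2}\,\nnn$, not when $\rho\gtrsim(2\sqrt{2}-\epsilon)\nnn$. So you should take $c$ slightly \emph{above} $2\sqrt{2}$ (or simply take $\s\t=\lceil \sqrt{2}\,\nnn\rceil$), not $c=2\sqrt{2}-\epsilon/2$. The conclusion is then that the tests cannot rule out border rank $r$ for any $r\gtrsim \nnn^2+2\sqrt{2}\,\nnn$, which is what the proposition asserts (the $-\epsilon$ in the statement absorbs the $O(1)$). This is cosmetic and does not affect the substance of your argument.
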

\begin{proof} 
Using the above $E_{110}'$ with $\s=\frac 12\tdim(E_{110}')$ and $\t=1$,
we would need
$
\binom{\s+1}2<\nnn^2+2\s 
$, i.e., $\frac{\s^2}2-\frac{3\s}2<\nnn^2$
to pass the $(210)$-test.
\end{proof}

In future work we plan to   geometrically analyze the $(111)$ test for matrix multiplication.

\subsection{Proof of Lemma \ref{absl2}} \label{bvtwo}
We compute, for each $1\leq j\leq 3$, the values $a_j$ and $b_j$ such that   the additional contribution
to the kernel of the $(210)$   map  obtained by adding a $j$-dimensional $\BB$-closed subset of
$\fsl_2$ at grid site $(s,t)$ is    $a_js+b_j$. 

Here  $V_{\o_2+\o_{\bv-1}}$ is zero and $V_{2\o_1+\o_{\bv-1}}$ is  
$S^3V$ (which has dimension $4$), and thus the target  $\La 2U^*\ot S^{3}V\ot W$ is multiplicity free.
Moreover, any monomial $x^i_s\ot x^k_t\ot y^u_j$ having the same
weight as an element of $\La 2U^*\ot S^{3 }V\ot W$ must project
nonzero onto it  because the   basis vectors of both the symmetric and exterior powers of the standard representation
are linear combinations of tensor monomials of the same weight with all coefficients nonzero.
Thus to compute the image of the map
\eqref{210map}, we just need to keep track of the   weights.

Since $V_{\o_2+\o_{\bv-1}}$ is zero, \eqref{Xker} just has two summands.

Case 1: $X=\langle v_1\ot v^2\rangle$. Here
  $(V\ot  v_1\ot v^2)\cap S^3V$ is two dimensional  as the weight $-3$ does not appear in $V\ot  v_1\ot v^2$
and by Remark \ref{zerorem} there can be no intersection with $V$. We conclude $a_1=2$, $b_1=0$.

Case 2: $X=\langle v_1\ot v^2, v_1\ot v^1-v_2\ot v^2\rangle$. Here 
$(V\ot  X)\cap S^3V$ is three  dimensional  as all weights of $S^3V$ appear in $V\ot X$
and by Remark \ref{zerorem} there is a one-dimensional  intersection with $V$. We conclude $a_2=3$, $b_2=\nnn$.
 
 Case 3: $X=\fsl(V)$. Here
$(V\ot  \fsl(V))\cap S^3V$ is three  dimensional   
and by Remark \ref{zerorem} there is a two-dimensional  intersection with $V$. We conclude $a_3=3$, $b_3=2\nnn$.
 \qed

\subsection{Proof of   Lemma \ref{absl3} }\label{bvthree}
We bound, for each $1\leq j\leq 8$, the values $a_j$ and $b_j$ such that  the additional contribution
to the kernel of the $(210)$   map  obtained by adding a $j$-dimensional $\BB$-closed subset of
$\fsl_3$ at grid site $(s,t)$ is    at most  $a_js+b_j$. 

Here $V_{2\o_1+\o_{\bv-1}}=V_{2\o_1+\o_2}$, which has dimension $15$, and
$V_{\o_2+\o_{\bv-1}}=V_{2\o_2}$,  which has dimension $6$.

In the weight diagram for the adjoint
representation of $\fsl_3=\fsl(V)$, there are five levels from the highest weight to the lowest. The first and last level
have one element and the others have two each.
The third level is the weight zero subspace.  
By Remark \ref{zerorem}, there can only be a contribution to the kernel from $V_{\o_1}$ when a level is filled.
More precisely, the highest weight vector of $V_{\o_1}$ can only appear when level $3$ is filled, the next weight  
vector can only appear when level $4$ is filled, and the full module can only appear when all $\fsl(V)$ is used.

Write $a\o_1+b\o_2= [a,b]$ for weights. We have the following weight space decompositions, where the number
outside the brackets is the multiplicity:
\begin{align*}
V_{\o_1}\ \ \ \ \  & 1 [ 1, 0] +1 [-1, 1] +1 [ 0,-1]\\
V_{2\o_2}\ \ \ \ \  &1 [ 0, 2] +1 [ 1, 0] +1 [ 2,-2] +1 [-1, 1] +1 [ 0,-1] +1 [-2, 0]\\
V_{2\o_1+\o_2}\ \ \ \ \  & 1 [ 2, 1] +1 [ 3,-1] +1 [ 0, 2] +2 [ 1, 0] +1 [-2, 3] +1 [ 2,-2]\\
\ \ \ \ \  \ \ &
+
2 [-1, 1] +2 [ 0,-1] +1 [-3, 2] +1 [ 1,-3] +1 [-2, 0] +1 [-1,-2].
\end{align*}
We compare these with the weights of the following spaces:
\begin{align*}
V\ot v_1\ot v^3 \ \ \ \ \  & [2,1]+[0,2]+[1,0]\\
V\ot v_2\ot v^3 \ \ \ \ \  &  [0,2]+[-2,3]+[-1,1]\\
V\ot v_1\ot v^2 \ \ \ \ \  & [3,-1]+[ 1,-1]+[2,-2]\\
V\ot wt(0) \ \ \ \ \   & [1,0]+[-1,1]+[0,-1]\\
V\ot v_3\ot v^2  \ \ \ \ \  & [2,-2]+ [0,-1]+[1,-3]\\
V\ot v_2\ot v^1 \ \ \ \ \  & [-1,1]+[-2,1]+[-2,0].
\end{align*}
We expect that a vector only is in the kernel when it is forced to be so for multiplicity reasons, but
we only proved this   for the module $V_{\o_1}$ (Remark \ref{zerorem}). For some of the other modules, in what follows we upper bound the dimension of the kernel
by assuming anything that could potentially be in the kernel is there. For the cases of $j=1$, $j=2$, $j=5$ and $j=8$ 
  our computation of   the maximum kernel is exact.
  We repeatedly utilize \eqref{abcount}.

\subsubsection*{Case $j=1$} Here $X=\langle v_1\ot v^3\rangle$. By direct computation
(or see Example \ref{sqrt2ex})
  $(V\ot X)\cap V_{2\o_1+\o_2}=\langle v_1\ot v_1\ot v^3\rangle$,   by comparing weights,
 $(V\ot X)\cap V_{2\o_2}=0$, and by Remark \ref{zerorem} $(V\ot V)\cap V_{\o_1}=0$.
 We conclude $a_1=1$, $b_1=0$.
 
 \subsubsection*{Case $j=2$} 
 There are two possibilities:  $X=  \langle  v_1,v_2\rangle\ot v^3$
 or $v_1\ot  \langle v^2, v^3\rangle$.
 In the first case,  
  $(V\ot X)\cap V_{2\o_1+\o_2}=\langle (v_1)^2 v_1v_2, (v_2)^2\rangle\ot v^3$,  
 and 
 $(V\ot X)\cap V_{2\o_2}=\langle   v_1\ww v_2 \rangle\ot v^3$.
 In the second case,  
  $(V\ot X)\cap V_{2\o_1+\o_2}=  (v_1)^2   \ot \langle v^3,v^2\rangle$,  
 and 
 $(V\ot X)\cap V_{2\o_2}=0$.
 Since the first gives larger values, we obtain $a_2=4$, $b_2=-1$.
 
 \subsubsection*{Case $j=3$} 
 There are three possibilities:  
 \begin{align*}X&=\langle v_1\ot v^3, v_1\ot v^2,  v_2\ot v^3\rangle,\\
  X&=\langle v_1\ot v^3, v_1\ot v^2,  (v_1\ot v^1-v_3\ot v^3)\rangle,\\
 X&=\langle v_1\ot v^3, v_1\ot v^2,  (v_2\ot v^2-v_3\ot v^3)\rangle.
 \end{align*}
 The first satisfies (see Example \ref{sqrt2ex}) $\tdim [(V\ot X)\cap V_{2\o_1+\o_2}]= 4$
 and $\tdim[(V\ot X)\cap V_{2\o_2}]= 1$, which would give $a_3=5$, $b_3=-1$.
 The third of these has the largest upper bound,  namely $a_3=10$, $b_3=-4$.
 Here $10=4+6$ with the $4$ being exact from the previous case
 and the $6=3+3$ estimated, as 
   all three new vectors over the previous case from $V\ot (v_2\ot v^2-v_3\ot v^3)$ could potentially occur in
both the modules $S^2U^*\ot V_{2\o_1+\o_2}$ and $\La 2 U^*\ot V_{2\o_2}$.

\subsubsection*{Case $j=4$}
Here $X$ must consist of  $\langle v_1\ot v^3, v_1\ot v^2,  v_2\ot v^3\rangle$ plus a weight zero vector.
Again, the weight zero vector could potentially contribute as above, so we get
the estimate $a_4=11=5+6$ and $b_4=-4=-1-3$ where the first terms in the summations  are from 
$\langle v_1\ot v^3, v_1\ot v^2,  v_2\ot v^3\rangle$ and the second from
estimating the contribution of  the weight zero vector.
 
\subsubsection*{Case $j=5$}
Here  there are three possibilities for $X$. Th largest kernels occur
when the span of all weight spaces greater than or equal to zero. We compute the exact kernel:
Since we do not include
the last two  levels in $\fsl(V)$, 
the weights $[2,-2]$ and $[-2,0]$, which appear in $V_{2\o_2}$,  do not
contribute the kernel, so we obtain a $6-2=4$ dimensional intersection.  
The weights $[-3,2],[-2,0],[- 1,-2],[1,-3]$, which appear in $V_{2\o_1+\o_2}$, 
do not contribute to the kernel, so we obtain a $15-4=11$ dimensional intersection,
and   only the  highest weight vector from $V_{\o_1}$  contributes to
the kernel by Remark \ref{zerorem}  so 
we obtain   $a_5=15=4+11$ and $b_5=\nnn-4$.

\subsubsection*{Case $j=6$}
Here there are two choices, the higher upper-bound occurs when we add $v_3\ot v^2$
to the previous case, which
could potentially contribute a three dimensional intersection with 
$V_{2\o_1+\o_2}$ and a two dimensional intersection with $V_{2\o_2}$
   so we obtain the upper bounds
$a_6=20$ and $b_6=\nnn-6$.

\subsubsection*{Case $j=7$}
Here there is a unique choice, just omitting the lowest weight vector.
Here we use the calculation for the $j=8$ case and subtract $\nnn$ by Remark \ref{zerorem}
to get the upper bound $a_7=21$, $b_7=2\nnn-6$.

\subsubsection*{Case $j=8$}
Here we take the full $\fsl(V)$. Using \eqref{vslv} we obtain
  $a_7=21$, $b_7=3\nnn-6$.
 \qed

\section{Proof that $\ur(M_{\langle \lll,\mmm,\nnn\rangle})\geq 
\ur(M_{\langle \lll-1,\mmm,\nnn\rangle})+1$.}\label{Lickappen}
Here is a simple proof of the statement, which was originally shown in  \cite{MR86c:68040}.
By the border substitution method \cite{MR3633766}, for any tensor $T\in A\ot B\ot C$
$$
\ur(T)\geq \tmin_{A'\subset A^*}\ur(T|_{A'\ot B^*\ot C^*})+1,
$$
where $A'\subset A^*$ is
a hyperplane. Moreover, if $T$ has   
symmetry group $G_T$, and $G_T$ has a unique closed orbit in $\BP A^*$,
then we may restrict $A'$ to be a point of that closed orbit. In the case of matrix multiplication,
$G_{M_{\langle \lll,\mmm,\nnn\rangle}}\supset SL(U)\times SL(V)$ can degenerate  any point in $\BP A=\BP(U^*\ot V) $
to the annihilator of $x^1_\lll$, so it amounts to taking $T|_{A'\ot B^*\ot C^*}$ to be the reduced matrix multiplication
tensor with $x^1_\lll=0$. But now we may (using $GL(A)\times GL(B)\times GL(C)$) degenerate this tensor
further to set all $x^i_\lll$ and $y^\lll_j$ to zero to obtain the result.

\bibliographystyle{amsplain}

\bibliography{Lmatrix}

\end{document}